\newcommand*\patchAmsMathEnvironmentForLineno[1]{%
  \expandafter\let\csname old#1\expandafter\endcsname\csname #1\endcsname
  \expandafter\let\csname oldend#1\expandafter\endcsname\csname end#1\endcsname
  \renewenvironment{#1}%
     {\linenomath\csname old#1\endcsname}%
     {\csname oldend#1\endcsname\endlinenomath}}%
\newcommand*\patchBothAmsMathEnvironmentsForLineno[1]{%
  \patchAmsMathEnvironmentForLineno{#1}%
  \patchAmsMathEnvironmentForLineno{#1*}}%
\renewcommand{\subsectionmark}[1]{}
\newenvironment{enumeratearabic*}{
\begin{enumerate*}[label=(\arabic*)] %%, leftmargin=0pt,labelindent=2em,itemindent=!]
}{
\end{enumerate*}
}
\newenvironment{enumerateroman*}{
\begin{enumerate*}[label=(\roman*)] %%, leftmargin=0pt,labelindent=2em,itemindent=!]
}{
\end{enumerate*}
}
\numberwithin{equation}{section}
\newtheorem{theoremcounter}{theoremcounter}[section]
\theoremstyle{plain}
\newtheorem{conjecture}[theoremcounter]{Conjecture}
\newtheorem{corollary}[theoremcounter]{Corollary}
\newtheorem{lemma}[theoremcounter]{Lemma}
\newtheorem{proposition}[theoremcounter]{Proposition}
\newtheorem{theorem}[theoremcounter]{Theorem}
\theoremstyle{plain}
\theoremstyle{definition}
\newtheorem{definition}[theoremcounter]{Definition}
\newtheorem{example}[theoremcounter]{Example}
\theoremstyle{remark}
\newtheorem{remark}[theoremcounter]{Remark}
\theoremstyle{nonumberremark}
\newcommand{\minwidthmathbox}[2]{%
  \mathmakebox[{\ifdim#1<\width\width\else#1\fi}]{#2}%
}
\newcommand{\bbC}{\ensuremath{\mathbb{C}}}
\newcommand{\bbP}{\ensuremath{\mathbb{P}}}
\newcommand{\bbQ}{\ensuremath{\mathbb{Q}}}
\newcommand{\bbZ}{\ensuremath{\mathbb{Z}}}
\newcommand{\bbone}{\ensuremath{\mathds{1}}}
\newcommand{\cA}{\ensuremath{\mathcal{A}}}
\newcommand{\cB}{\ensuremath{\mathcal{B}}}
\newcommand{\cC}{\ensuremath{\mathcal{C}}}
\newcommand{\cD}{\ensuremath{\mathcal{D}}}
\newcommand{\cH}{\ensuremath{\mathcal{H}}}
\newcommand{\cL}{\ensuremath{\mathcal{L}}}
\newcommand{\cO}{\ensuremath{\mathcal{O}}}
\newcommand{\cS}{\ensuremath{\mathcal{S}}}
\newcommand{\fraka}{\ensuremath{\mathfrak{a}}}
\newcommand{\frakb}{\ensuremath{\mathfrak{b}}}
\newcommand{\frakd}{\ensuremath{\mathfrak{d}}}
\newcommand{\frake}{\ensuremath{\mathfrak{e}}}
\newcommand{\frakg}{\ensuremath{\mathfrak{g}}}
\newcommand{\frakl}{\ensuremath{\mathfrak{l}}}
\newcommand{\frakm}{\ensuremath{\mathfrak{m}}}
\newcommand{\frakr}{\ensuremath{\mathfrak{r}}}
\newcommand{\fraks}{\ensuremath{\mathfrak{s}}}
\newcommand{\frakA}{\ensuremath{\mathfrak{A}}}
\NewCommandCopy{\rightarroworig}{\rightarrow}
\renewcommand{\rightarrow}
  {\DOTSB\protect\relbar\mspace{-9.7mu}\rightarroworig}
\renewcommand{\twoheadrightarrow}
  {\DOTSB\protect\rightarroworig\mspace{-15mu}\rightarroworig}
\NewCommandCopy{\leftarroworig}{\leftarrow}
\renewcommand{\leftarrow}
  {\DOTSB\protect\leftarroworig\mspace{-9.7mu}\relbar}
\newcommand{\id}{\ensuremath{\mathrm{id}}}
\DeclareMathOperator{\Spec}{Spec}
\DeclareMathOperator{\Lie}{Lie}
\DeclareMathOperator{\I}{I}
\DeclareMathOperator{\per}{per}
\DeclareMathOperator{\Sh}{Sh}
\def\sh{\shuffle}
\def\degb{{\deg_\mathcal{B}}}
\def\gmot{\frakg^\frakm}
\def\dgmot{\frakd\frakg}
\def\sgmot{\fraks\frakd\frakg}
\def\bgmot{\frakb\frakg}
\def\rbgmot{\frakr\frakb\frakg}
\def\egmot{\frake\frakg}
\DeclareMathAlphabet{\mathcal}{OMS}{cmsy}{m}{n}
\newcommand{\headertitle}{{\normalfont%
}}
\newcommand{\headerauthors}{%
}
\title{%
  Relating depth graded and block graded motivic Lie algebras
}
\author{%
  Adam Keilthy%
}
\begin{document}

\thispagestyle{scrplain}
\begingroup
\deffootnote[1em]{1.5em}{1em}{\thefootnotemark}
\maketitle
\endgroup

\begin{abstract}
    %The algebra of multiple zeta values contains a number of somehow ``unexpected'' relations, leading to a ``depth defect'' that does not exist for the algebra of Euler sums or cyclotomic multiple zeta values of higher level. For these more general settings, the depth filtration is equal to the coradical filtration coming from the motivic structure, but in the case of multiple zeta values, there is a discrepancy. 
    Using the block filtration as a realisation of the coradical filtration, we study the discrepancy between the depth filtration and the coradical filtration for motivic multiple zeta values. We construct an explicit dictionary between a certain subspace of block graded multiple zeta values and totally odd multiple zeta values and show that all expected relations in the depth graded motivic Lie algebra may be realised in the block graded Lie algebra as the kernel of an explicit map. We also discuss some connections to the uneven Broadhurst-Kreimer conjecture, and outline a possible approach.
\end{abstract}
\section{Introduction}
For a tuple $(k_1,\ldots,k_r)$ of positive integers, with $k_r\geq 2$, one can define the associated multiple zeta value (MZV) as the multiple sum
\[\sum_{0<n_1<\cdots<n_r}\frac{1}{n_1^{k_1}n_2^{k_2}\ldots n_r^{k_r}}.\]
We say this multiple zeta has depth $r$ and weight $k_1+\cdots+k_r$.

The study of multiple zeta values dates back to Euler, but they have recently regained substantial interest due to their appearances across a number of areas of mathematics and theoretical physics, including the study of Lie algebras \cite{SchnepsPoisson06}, knot theory \cite{BarNatan98} and string amplitudes \cite{BroadhurstKConj96}. They form an algebra under the harmonic, or stuffle, product:
\begin{example}
\begin{align*}
\zeta(k)\zeta(l) =&{} \sum_{m,n>0} \frac{1}{m^kn^l}\\
=&{} \sum_{0<m<n}\frac{1}{m^kn^l} + \sum_{0<n<m}\frac{1}{n^lm^k}+\sum_{0<m=n}\frac{1}{n^{k+l}}\\
=&{} \zeta(k,l)+\zeta(l,k)+\zeta(k+l),
\end{align*}
\end{example}
and are known to satisfy many algebraic relations, including the double shuffle relations \cite{Racinet02DoubleShuff}, the associator relations \cite{LeMurakamiMZVAssoc}, the confluence relations \cite{HSConfluence19}, and many more. A long standing question in the field has been to describe a complete set of relations. While there are many candidates, providing such a description is very challenging, as it would encapsulate conjectures such as the algebraic independence of 
\[\{\zeta(3),\zeta(5),\zeta(7),\ldots\}.\]

Multiple zeta values notably arise as periods of mixed Tate motives \cite{DG05MixedTate,BrownMTM12}. As such, it is possible to upgrade them to objects called motivic periods, the algebraic theory of which is much more rigid. As motivic periods, the algebra $\cH$ of motivic multiple zeta values (mMZVs) is graded by weight, and the dimensions of the weight graded pieces are encoded in the generating series
\[\frac{1-s^2}{1-s^2-s^3}.\]
The coefficients of this series provide an upper bound on the dimension as a consequence of general motivic theory \cite{Goncharov01MPL,Terasoma01MTM}. The proof that this  describes exactly the dimensions is quite involved, and relies on the introduction of Brown's level filtration \cite{BrownMTM12} on a particular subspace of mMZVs. This is a sharp contrast to similar results due to Deligne \cite{Del10-23468}, in which a comparatively simpler proof is possible by considering the associated graded algebra with respect to the filtration induced by depth.

The failure of depth in the case of mMZVs is due to the existence of certain ``exceptional'' relations arising from connections to cusp forms \cite{GKZ06,Pollack09Thesis,BrownDepthGraded21}. For example, the unique cusp form of weight 12 corresponds to the following ``unexpected'' depth drop
\[28\zeta(3,9) + 150\zeta(5,7) +168\zeta(7,5) = \frac{5197}{691}\zeta(12).\]
These exceptional relations make the study of depth graded MZVs challenging, as their origin is still somewhat mysterious \cite{Hain2016hodge,Hain2020universal}.

In contrast, the block filtration, introduced by the author \cite{KeilthyBlock21,KeilthyThesis20} based on work by Charlton \cite{CharltonThesis,CharltonBlock21}, has no such defects in in associated graded. Indeed, we obtain an essentially isomorphic algebra, in which a complete description of (motivic) relations is possible in low block degree. However, as established in a recent article \cite{CK22Evaluation242}, traces of the depth defect may still be seen in the block graded algebra, at least modulo products. The cusp form relation above, becomes the relation
\[7\zeta(2,2,2,4,2) +\frac{5}{2}\zeta(2,2,4,2,2) + 7\zeta(2,4,2,2,2) = 0 \text{ modulo products}.\]
In \cite{CK22Evaluation242}, we establish that such cuspidal relations are a consequence of the block graded relations established in \cite{KeilthyBlock21}.

As such, one might hope to find a natural interpretation of the depth defect reflected in the block graded algebra. In this article, we make first steps towards such a connection. Specifically, we construct a commutative triangle
\[\begin{tikzcd}
    \bgmot\arrow[d]\arrow[rd] & {} \\
    \sgmot\arrow[r] & \egmot
\end{tikzcd}\]
involving the Lie algebra $\bgmot$ dual to block graded MZVs, a Lie subalgebra $\sgmot$ of the Lie algebra dual to depth graded MZVs, and explicit maps to a common quotient Lie algebra $\egmot$. Based on a standard conjecture, we conjecture that $\sgmot\cong\egmot$, thereby giving us a way to theoretically study the subalgebra $\sgmot$ via the block graded Lie algebra $\bgmot$. In particular, we can see ``exceptional'' relations in the depth graded setting in the kernel of the map $\bgmot\to\egmot$.

Dualising this map, we obtain via \cref{thm:evenfunctionalsgiverelations} a dictionary between a subspace of MZVs of depth $r$ and MZVs of block degree $r$, and can give an explicit representation of the restriction of this map to totally odd MZVs
\[\zeta(2k_1+1,\ldots,2k_r+1)\]
via ``almost'' Hoffman elements of the form
\[\zeta_1(\{2\}^{a_1},3,\ldots,3,\{2\}^{a_r})\]
where $\zeta_1(\ldots)$ is a certain regularized iterated integral and $\{m\}^n$ represents the sequence consisting of $m$ repeated $n$ times.

Assuming a result called the uneven Broadhurst-Kreimer conjecture, this dictionary then represents an explicit representation of the natural map from depth graded MZVs to block graded MZVs coming from the inclusion of depth as a subfiltration of the block filtration.

\begin{example}
    Some examples of applications of \cref{thm:evenfunctionalsgiverelations} include the following relations modulo products and terms of lower block degree:
    \begin{align*}
        \zeta(2,2,5,2)-\zeta(2,5,2,2) =&{} 8\zeta(1,3,7) -8\zeta(1,5,5),\\
        \zeta(2,3,3,2,2,2)-\zeta(2,2,2,3,3,2) =& 35\zeta(6,8) +28\zeta(5,9) + 15\zeta(7,7),\\
        8\zeta(3,3,5) =&{} \zeta(2,3,4,2) - \zeta(2,1,1,2,3,2)-2\zeta(1,2,3,3,2)\\
        &-2\zeta(2,1,3,3,2)-2\zeta(2,3,1,3,2)-2\zeta(2,3,3,1,2).
    \end{align*}
\end{example}
The structure of the article is as follows.

In \cref{sec:mmzv}, we will give a quick guide to the necessary background. As the theory of motivic periods is deep and extensive, we will give only a bare bones explanation. We will introduce a formal notion of motivic multiple zeta values and their dual Lie algebras. We will then define both the depth and block filtrations and give polynomial expressions for the associated graded Lie algebras.

In \cref{sec:relatingDandB}, we will establish an explicit connection between the depth graded and block graded Lie algebras via a commutative diagram. We use this to connect relations in the depth graded Lie algebra to a subalgebra of the block graded Lie algebra. We also use this commutative diagram to provide an explicit recipe for relating certain block-homogeneous linear combinations of MZVs with depth-homogeneous linear combinations of MZVs. We will also, in particular, give an expression for totally odd MZVs of depth $r$, i.e. those of the form $\zeta(2k_1+1,\ldots,2k_r+1)$ in terms of particular MZVs of block degree $r$, which we will call ``almost'' Hoffman zeta values. 

In \cref{sec:existingconj}, we follow up on a number of connections with existing conjectures about the structure of the depth graded Lie algebra and the space of totally odd MZVs, relating them to a number of conjectures made throughout the article and discussing some potential strategies for approaching these conjectures from a block graded perspective.

\subsubsection*{Acknowledgements}
The author would like to thank Steven Charlton for several valuable discussions on topics addressed in this article, and Martin Raum for his advice and comments. The author was supported by the “Postdoctoral program in Math-
ematics for researchers from outside Sweden” (project KAW 2020.0254).

\section{Multiple zeta values and motivic periods}\label{sec:mmzv}
Via their representation as iterated integrals
\[\zeta(k_1,\ldots,k_r)=(-1)^r\int_{0\leq t_1\leq \cdots \leq t_{k_1+\cdots+k_r}}\prod_{i=1}^{k_1+\cdots+k_r} \frac{dt_i}{t_i-a_i},\]
where $a_i=1$ if $i\in\{1,k_1+1,k_1+k_2+1,\ldots,k_1+\cdots+k_{r-1}+1\}$ and $a_i=0$ otherwise, multiple zeta values fall into a class of numbers called periods \cite{KZ01}. As such, we are able to lift them to objects called motivic periods - formal algebraic objects satisfying only relations coming from geometry. These formal objects are much simpler to study, but still provide valuable insight to the algebraic structure of multiple zeta values. 

In particular, by studying the algebra of motivic multiple zeta values, we obtain an algebra that is graded by weight, that comes equipped with a coaction encoding relations among elements of the algebra, and which is known to be (non-canonically) isomorphic to the shuffle algebra
\[\bbQ\langle f_3,f_5,f_7,\ldots\rangle[f_2]\]
with one commutative generator in weight 2, and one non-commutative generator in every odd weight greater than one. In this formal algebraic setting, questions of transcendence are easy to resolve, and so we can consider further simplifications, such as working modulo products, or in an associated graded algebra. The general theory of motivic periods is extensive \cite{BrownMotivicPeriodNotes}, and so we sketch here the essentials, rather than exploring the full connection between motivic multiple zeta values and the Tannakian formalism for the category of mixed Tate motives over $\Spec\bbZ$ and the motivic fundamental group of $\bbP^1\setminus\{0,1\infty\}$ \cite{DG05MixedTate}.

\subsection{A poor man's guide to motivic multiple zeta values and iterated integrals}
\begin{definition}\label{def:Hn}
The algebra $\cH$ of motivic multiple zeta values is the $\bbQ$-algebra spanned by symbols 
\[\I^\frakm(a_0;a_1,\ldots,a_n;a_{n+1})\text{ where }a_i\in\{0,1\}\,,\]
called motivic multiple zeta values or motivic iterated integrals, satisfying the following properties:
\begin{enumerate}
    \item (Equal boundaries) $\I^\frakm(a_0;a_1,\ldots,a_n;a_0)=\delta_{n,0}$,
    \item (Reversal of paths) $\I^\frakm(a_0;a_1,\ldots,a_n;a_{n+1})=(-1)^n\I^\frakm(a_{n+1};a_n,\ldots,a_1;a_0)$,
    \item (Functoriality) $\I^\frakm(a_0;a_1,\ldots,a_n;a_{n+1})=\I^\frakm(1-a_0;1-a_1,\ldots,1-a_n;1-a_{n+1})$,
    \item (Shuffle product) For $1<r<n$, denote by $\Sh_{r,n-r}$ the set of permutations $\sigma$ on $n$ satisfying
    \[ \sigma(1)<\sigma(2)<\cdots<\sigma_r\text{ and }\sigma(r+1)<\cdots<\sigma_n\,.\]
    Then
    \[\I^\frakm(0;a_1,\ldots,a_r;1)\I^\frakm(0;a_{r+1},\ldots,a_n;1)=\sum_{\sigma\in\Sh_{n,r}}\I^\frakm(0;a_{\sigma^{-1}(1)},\ldots,a_{\sigma^{-1}(n)};1)\,,\]
    \item (Period map) There is a ring homomorphism $\per:(\cH,\sh)\to (\bbC,{}\cdot{})$, called the period map, sending a motivic iterated integral to the corresponding complex iterated integral,
    \item (Relations) Relations among elements in $\cH$ are precisely those elements in $\ker(\per)$ that are \textit{motivically stable} - we will make this precise in a moment.
\end{enumerate}

For a tuple of positive integers $(k_1,\ldots,k_d)$, and \( \ell \geq 0 \), we write \( \zeta^\frakm = \zeta^\frakm_0 \) and  
\begin{equation}\label{eqn:zasint}
\zeta^\frakm_\ell(k_1,\ldots,k_d)\coloneqq(-1)^d\I^\frakm(0;\{0\}^\ell,1,\{0\}^{k_1-1},\ldots,1,\{0\}^{k_d-1};1)\,,
\end{equation}
where $\{0\}^n$ denotes $n$ repeated zeroes.
\end{definition}

\begin{remark}
    The algebra $\cH$ is graded by weight. As such, we will denote the weight $N$ component of $\cH$ and use similar notation for all spaces derived from it. We will also use the notation $\cH_{>0}$ to denote the vector subspace of $\cH$ spanned by mMZVs of positive weight, and similarly for all spaces derived from $\cH$.
\end{remark}

\begin{remark}\label{rem:regularisation}
A priori, the period map of \cref{def:Hn} will take some motivic iterated integrals to divergent integrals. However, there is a unique ring homomorphism $\per_{\sh,T}:(\cH,\sh)\to(\bbC[T],\cdot)$ taking a motivic iterated integral to the corresponding complex iterated integral when it converges, and taking $\I^\frakm(0;1;1)$ to $T$ \cite{ChenIteratedIntegrals77}. This is called shuffle regularisation, and we will always take $T=0$. There is an analogous notion of stuffle regularisation \cite{IKZ06}, but we will not make use of it here.
\end{remark}

%\begin{remark}
%The reversal of paths property and the functoriality property give an important relation for mMZVs called the \emph{duality} relation:
%\[\I^\frakm(0;a_1,\ldots,a_n;1)=(-1)^n\I^\frakm(0;1-a_n,\ldots,1-a_1;1)\]
%\end{remark}

Denote by $\cA:=\cH/(\zeta^\frakm(2))$ and denote the image of $\I^\frakm(a_0;a_1,\ldots,a_n;a_{n+1})$ in this quotient by $\I^\frakA(a_0;a_1,\ldots,a_n;a_{n+1})$. The Tannakian formalism tells us that $\cA$ coacts on $\cH$, with an explicit formula due to Goncharov \cite{GoncharovGalois01}:
\begin{align*}\label{eqn:coaction}
\Delta:\cH \to \cA &\otimes\cH,\\
\I^\frakm(a_0;a_1,\ldots,a_n;a_{n+1})&\mapsto\\
\sum_{\substack{i_0<i_1<\cdots<i_{k+1}\\i_0=0,\, i_{k+1}=n+1}}\left(\prod_{s=0}^k\I^\fraka(a_{i_s};a_{i_s+1},\ldots,a_{i_{s+1}-1};a_{i_{s+1}})\right)&\otimes\I^\frakm(a_0;a_{i_1},\ldots,a_{i_k};a_{n+1}))\,.\end{align*}
This is called the motivic coaction, and we call an element $R\in\ker(\per)$ motivically stable if $\per(R^\prime)=0$ for all conjugates $R^\prime$ of $R$ under the coaction.

It is a standard conjecture that the period map is injective. Indeed, most known relations among multiple zeta values are known to be motivically stable, and hence define relations among motivic multiple zeta values. Indeed, motivic relations include the double shuffle relations, the associator relations, and the confluence relations.

\subsubsection{Depth and block filtrations}\label{subsec:filtrations}
From this point, we shall consider mMZVs modulo $\zeta^\frakm(2)$ and work exclusively in $\cA$, though much of this can be easily modified for $\cH$. The formula for the motivic coaction induces a coproduct on $\cA$, giving $\cA$ the structure of a Hopf algebra. As in the case of multiple zeta values, we can assign to a motivic iterated integral $\I^\fraka(0;a_1,\ldots,a_n;1)$ a weight of $n$ and a depth given by $\#\{a_i\mid a_i=1\}$.

Weight defines a Hopf algebra grading on $\cA$, while depth only induces a Hopf algebra filtration
\[\cD_n\cA := \langle \zeta^\fraka(k_1,\ldots,k_r)\mid r\leq n\rangle_\bbQ.\]

The Hopf algebra structure on $\cA$ naturally induces another filtration, called the coradical filtration. Defining $\cC_0\cA$ to be the sum of all simple subcoalgebras (in $\cA$, this is just a copy of $\bbQ$), we define 
\[\cC_n\cA := \Delta^{-1}(\cA\otimes\cC_0\cA + \cC_{n-1}\cA\otimes\cA).\]
In general, the coradical filtration is a very natural filtration to consider, with connections to Hochschild cohomology \cite{Stefan98CoradicalCohomology}, connections with injectivity of coalgebra morphisms, and immediate compatibility with most coalgebraic constructions. In our case, we in fact have that the coradical filtration defines a grading on $\cA$, expressed in terms of $f$-degree for any choice of coalgebra isomorphism
\[\cA\cong \bbQ\langle f_3,f_5,f_7\ldots\rangle.\]

Usually, the coradical filtration can be quite hard to describe in general. Indeed, computing the $f$-degree of any multiple zeta value is quite difficult. However, using the notion of an alternating block decomposition, first introduced by Charlton \cite{CharltonBlock21,CharltonThesis}, we can define a combinatorial degree function for which the associated filtration equals the coradical filtration \cite{KeilthyBlock21}

\begin{definition}
    We call a word $w$ in two letters $\{x,y\}$ an alternating block if it is of the form $xyxy\ldots$ or $yxyxy\ldots$. The alternating block decomposition of a word $w$ is the unique minimal factorisation into alternating blocks.
\end{definition}

\begin{definition}
    The block degree $\degb(w)$ of a word $w$ in two letters $\{x,y\}$ is equal to one less than the number of blocks in its alternating block decomposition. Equivalently, the block degree $\degb(w)$ is equal to the number of occurrences of a subword $xx$ or $yy$ in $w$. 
\end{definition}

\begin{definition}
    The block filtration on $\cA$ is the increasing filtration
    \[\cB_n\cA:=\langle \I^\fraka(0;w;1)\mid \degb(0w1)\leq n\rangle_\bbQ.\]
\end{definition}

\begin{definition}
    Given a tuple $(\ell_0,\ell_1,\ldots,\ell_n)$ of positive integers, define
    \[\I^\fraka(\ell_0,\ldots,\ell_n):= \I^\fraka(0;w;1)\]
    where $w$ is the unique word in $\{0,1\}$ such that the alternating block decomposition of $0w1$ consists of alternating blocks of lengths $(\ell_0,\ldots,\ell_n)$.
\end{definition}

With this definition, the block filtration is given by
\[\cB_n\cA:=\langle \I^\fraka(\ell_0,\ldots,\ell_r)\mid r\leq n\rangle_\bbQ.\]

\begin{proposition}\label{prop:blockiscoradical}
    The block filtration on $\cA$ is equal to the coradical filtration
    \[\cB_n\cA=\cC_n\cA.\]
\end{proposition}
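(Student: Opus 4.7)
The plan is to leverage that $\cA$ is a connected graded Hopf algebra, so that its coradical filtration $\cC_\bullet$ is characterized inductively by $\cC_0\cA = \bbQ\cdot 1$ and $\cC_n\cA = \Delta^{-1}(\cA\otimes\cC_{n-1}\cA + \cC_0\cA\otimes\cA)$, and equivalently as the unique minimal coalgebra filtration with $\cC_0 = \bbQ$. My strategy is to verify the properties $\cB_0\cA = \bbQ$ and
\[\bar\Delta\bigl(\cB_n\cA\bigr)\ \subseteq\ \cB_{n-1}\cA\otimes\cB_{n-1}\cA\]
for the block filtration (with $\bar\Delta = \Delta - 1\otimes\id - \id\otimes 1$), which together give $\cB_n\cA\subseteq\cC_n\cA$ by induction. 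The reverse inclusion is then obtained by comparing graded dimensions via the (non-canonical) coalgebra isomorphism $\cA\cong\bbQ\langle f_3,f_5,f_7,\ldots\rangle$, under which the coradical filtration corresponds to the word-length filtration.

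The base case $\cB_0\cA = \bbQ$ is short: a word $0w1$ of block degree zero is a single alternating block, forcing $w = 1,0,1,0,\ldots,1,0$, whereupon $\I^\fraka(0;w;1)$ evaluates (with the shuffle regularization $T=0$) to a rational multiple of a power of $\zeta^\fraka(2) = 0$, hence to a scalar in $\bbQ\cdot 1$. The heart of the proof is the compatibility condition above. Starting from Goncharov's explicit formula, I would examine each term of the coaction of $\I^\fraka(0;a_1,\ldots,a_N;1)$, indexed by a subsequence $0 = i_0 < i_1 < \cdots < i_k < i_{k+1} = N+1$, and track how the alternating block decomposition of $0a_1\cdots a_N1$ distributes between the ``outer'' factor $\I^\fraka(0;a_{i_1},\ldots,a_{i_k};1)$ and the product of ``inner'' factors on the intervals $(i_s,i_{s+1})$. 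The desired bound reduces to the combinatorial assertion that every block transition (a consecutive pair $00$ or $11$) of the original word is either inherited intact by exactly one factor or is created fresh at a puncture index, with the transitions so contributed to the two sides totalling at most $n$ --- and exactly $n$ only in the ``boundary'' summands $1\otimes x$ and $x\otimes 1$ of the full coaction, which are killed in passing to $\bar\Delta$.

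For the reverse inclusion $\cC_n\cA\subseteq\cB_n\cA$, I would note that the block filtration exhausts $\cA$ by construction, and then compare graded dimensions in each bi-degree (weight, filtration-degree): producing an explicit spanning set of $\gr_n^\cB\cA$ indexed by admissible block data whose cardinality in each weight matches the number of $f$-words of length $n$ and that weight forces the two filtrations to coincide. The main obstacle is the combinatorial accounting behind the compatibility step. A puncture $i_s$ may fall at a block boundary of $0a_1\cdots a_N 1$ (in which case it cleanly absorbs an existing transition into an inner factor) or mid-block (splitting the block and creating new boundary transitions visible on the inner side), and arranging these cases so that the transition-counting inequality holds simultaneously across every summand of Goncharov's formula requires delicate bookkeeping tied to the alternating block structure. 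This is where the particular combinatorics of the block decomposition is decisive, and where the bulk of the technical effort lies.
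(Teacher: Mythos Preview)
The paper does not give a proof; it refers to \cite{KeilthyBlock21}. Your outline is in the right spirit and the coproduct-compatibility step is indeed the heart of the argument there.

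There is one genuine gap. In bounding the block degree of the \emph{left} tensor factor of a Goncharov term you are dealing with a shuffle product $\prod_s \I^\fraka(a_{i_s};\ldots;a_{i_{s+1}})$ of several inner integrals, and your transition-counting only controls the block degree of each individual factor. To place the product in the correct $\cB_m$ you need that the block filtration is multiplicative under shuffle, $\cB_i\cdot\cB_j\subseteq\cB_{i+j}$; this is true but is a separate combinatorial lemma, not a consequence of the bookkeeping you describe. Once multiplicativity is in hand the combinatorics actually yields the sharper statement $\Delta(\cB_n)\subseteq\sum_{i+j=n}\cB_i\otimes\cB_j$ (in a non-vanishing term each outer transition $a_{i_s}=a_{i_{s+1}}$ forces that segment to be empty and accounts for exactly one transition of the original word, the remaining transitions being distributed among the non-trivial inner factors), and then the reverse inclusion $\cC_n\subseteq\cB_n$ follows immediately from the minimality of the coradical filtration among coalgebra filtrations with $\cB_0=\bbQ$ --- no dimension count is needed. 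Your spanning-set route is valid in principle, but exhibiting a spanning set of $\gr_n^\cB\cA$ of size exactly $\dim\gr_n^\cC\cA$ amounts to producing enough block-graded relations to cut the naive generating set down, which is itself a substantial structural result rather than a formality.
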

For the proof of the above, along with further properties of the block filtration, we refer the reader to the author's previous work \cite{KeilthyBlock21}.

\begin{corollary}\label{cor:depthsubblock}
    The depth filtration is a subfiltration of the block filtration
    \[\cD_n\cA\subset\cB_n\cA.\]
\end{corollary}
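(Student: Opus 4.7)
The plan is to use \cref{prop:blockiscoradical} to reduce the containment to showing $\cD_n\cA \subseteq \cC_n\cA$, and then prove this by induction on $n$ using Goncharov's motivic coaction to bound coradical depth by depth.

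The base case $n=0$ is immediate since $\cD_0\cA = \bbQ = \cC_0\cA$. For the inductive step I would apply Goncharov's coaction formula to $\zeta^\fraka(k_1,\ldots,k_n) = (-1)^n\I^\fraka(0;1,0^{k_1-1},\ldots,1,0^{k_n-1};1)$, producing summands of the form $\big(\prod_s \I^\fraka(a_{i_s};\cdots;a_{i_{s+1}})\big) \otimes \I^\fraka(0;a_{i_1},\ldots,a_{i_k};1)$. By the defining recursion $\cC_n\cA = \Delta^{-1}(\cA \otimes \cC_0\cA + \cC_{n-1}\cA \otimes \cA)$, it suffices to show that each such summand lies in $\cA \otimes \cC_0\cA + \cC_{n-1}\cA \otimes \cA$.

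If the chosen subsequence contains $r' \geq 1$ of the $n$ ``$1$''-letters of the defining word, then the right factor has depth $r'$, and in every non-vanishing summand the non-trivial left-factor intervals (those with opposite boundaries, by the equal-boundaries axiom) distribute the remaining $n - r'$ ``$1$''-letters among their middles. After using reversal to bring each non-trivial interval into standard $\I^\fraka(0;\cdot;1)$ form and shuffle-regularising any leading zeros, each interval expresses as a linear combination of $\zeta^\fraka$'s of depth bounded by the number of ``$1$''-letters in its middle. The inductive hypothesis places each such interval in the corresponding $\cC_{d_s}\cA$, and since the coradical filtration on $\cA$ is closed under multiplication, the product lies in $\cC_{n-r'}\cA \subseteq \cC_{n-1}\cA$, so the summand lies in $\cC_{n-1}\cA \otimes \cA$. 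The case $r' = 0$ is discarded: the right factor then has the form $\I^\fraka(0;0^m;1)$ with $m \geq 1$, which vanishes motivically by iterated application of the shuffle product axiom together with $\I^\fraka(0;0;1) = -\I^\fraka(0;1;1) = -\zeta^\fraka(1) = 0$ after shuffle regularisation. Together with the boundary summands $\zeta^\fraka \otimes 1 \in \cA \otimes \cC_0\cA$ and $1 \otimes \zeta^\fraka \in \cC_0\cA \otimes \cA$, this places $\zeta^\fraka(k_1,\ldots,k_n)$ in $\cC_n\cA = \cB_n\cA$, closing the induction.

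The main obstacle will be the depth bookkeeping for the general interval integrals arising in Goncharov's formula, since they have arbitrary boundary values in $\{0,1\}$: one must verify that reversal together with shuffle-regularisation of leading zero strings bounds above the depth of each interval by the number of ``$1$''-letters in its middle, so that the inductive hypothesis applies. The remaining ingredient---that the coradical filtration on the connected graded Hopf algebra $\cA$ is compatible with multiplication---is standard.
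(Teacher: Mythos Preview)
Your proof is correct and follows essentially the same approach as the paper: both argue by induction on $n$, using Goncharov's coaction formula together with the identification $\cB_n\cA=\cC_n\cA$ to reduce to showing that the left tensor factor lands one step lower in the filtration. The only organisational difference is that the paper packages your interval-by-interval bookkeeping into the single assertion $\Delta\cD_n\cA\subset\cA\otimes\bbQ+\cD_{n-1}\cA\otimes\cA$ (using that depth is visibly multiplicative under the shuffle product) and then applies the inductive hypothesis once, thereby sidestepping your appeal to multiplicativity of the coradical filtration---though, as you note, that fact is standard for connected graded Hopf algebras.
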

\begin{proof}
    Direct computation shows that that $\cD_1\cA\subset\cB_1\cA$. Furthermore, from the formula for the coproduct, it is easy to see that
    \[\Delta\cD_n\cA \subset \cA\otimes \bbQ + \cD_{n-1}\cA\otimes\cA, \]
    and so
    \[\cD_n\cA\subset\Delta^{-1}(\cA\otimes\bbQ + \cD_{n-1}\cA\otimes\cA).\]
    BY induction on $n$, we may conclude that
    \[\cD_n\cA\subset\Delta^{-1}(\cA\otimes\bbQ+\cD_{n-1}\cA\otimes\cA)\subset\Delta(\cA\otimes\bbQ+\cB_{n-1}\cA)=\cB_n\cA.\]
\end{proof}

\begin{remark}
    As mentioned previously, via the Hopf algebra isomorphism 
    \[\cA\cong \bbQ\langle f_3,f_5,\ldots\rangle,\] the algebra $\cA$ is coradically graded, with grading given by the degree in the $f$-alphabet. However, as computing this isomorphism is in general difficult, the block degree remains the most efficient way of accessing the coradical filtration.
\end{remark}

\subsection{The motivic Lie algebra and its graded counterparts}\label{subsec:Liealgebra}
To simplify further the study of $\cA$ and its defining relations, we can consider the Lie coalgebra of indecomposables 
\[\cL:=\cA_{>0}/\cA_{>0}^2,\] and its dual Lie algebra $\gmot$, called the motivic Lie algebra \cite{DG05MixedTate, BrownMTM12}. This is a Lie subalgebra of $\bbQ\langle e_0,e_1\rangle$ equipped with the Ihara bracket $\{-,-\}$, and encodes relations among mMZVs modulo products as follows: a linear combination $\xi$ of mMZVs vanishes in $\cL$ (i.e. modulo products) if and only if $(\xi,\sigma)=0$ for every $\sigma\in\gmot$. This can be made more explicit by noting that the pairing
\[\cL\times \gmot\to \bbQ\]
is induced by the naive pairing
\begin{align*}
    \bbQ\langle e_0,e_1\rangle \times &\bbQ\langle e_0,e_1\rangle \to \bbQ,\\
    (u,v) &\to\begin{cases}
        1 \text{ if }u=v,\\
        0 \text{ otherwise}
    \end{cases}
\end{align*}
where we identify a word $e_{a_1}e_{a_2}\ldots e_{a_n}$ in the first factor with the corresponding motivic iterated integral $\I^\frakl(0;a_1,\ldots,a_n;1)$ in $\cL$.

As such, in order to describe relations among mMZVs modulo products, it suffices to describe $\gmot$ as a subspace of $\bbQ\langle e_0,e_1\rangle$. From the theory of mixed Tate motives \cite{DG05MixedTate}, we know that $\gmot$ is non-canonically isomorphic to the free Lie algebra
\[\Lie[\sigma_3,\sigma_5,\sigma_7,\ldots]\]
with one generator in every odd weight greater than 1. However, we do not have a canonical representative of each $\sigma_{2k+1}$ for $k\geq 5$, limiting the use we can make of this isomorphism for explicit computations.

\begin{remark}
    While we do not have a canonical representative of each $\sigma_{2k+1}$, we do know that $(\I^\frakl(0;1,\{0\}^{2k};1),\sigma_{2k+1})\neq 0$, and as such we can and will assume $(\I^\frakl(0;1,\{0\}^{2k};1),\sigma_{2k+1})=1$ throughout
\end{remark}

\subsubsection{The depth graded motivic Lie algebra}\label{subsec:dgLie}
The depth filtration on $\cA$ and $\cL$ may be viewed as dual to a decreasing filtration on $\bbQ\langle e_0,e_1\rangle$, also called the depth filtration:
\[\cD^n:=\langle w\in\bbQ\langle e_0,e_1\rangle\mid \deg_{e_1}(w)\geq n\rangle_\bbQ\]
This induces a filtration on the motivic Lie algebra 
\[\cD^n\gmot := \cD^n\cap \gmot\]
that is compatible with the Lie algebra structure. As such we can consider the associated graded Lie algebra.
\begin{definition}
The depth graded Lie algebra is the vector space
\[\dgmot:=\bigoplus_{n\geq 1} \cD^n\gmot/\cD^{n+1}\gmot\]
equipped with the depth graded Ihara bracket $\{-,-\}$.
\end{definition}

This Lie algebra encodes relations among mMZVs modulo products and terms of lower depth \cite{BrownDepthGraded21}. One might hope that considering this depth graded structure might elucidate the structure of $\gmot$, as the depth graded analogues of many relations, such as the double shuffle relations, are significantly simplified. We furthermore have that the image of the non-canonical generators $\sigma_{2k+1}$ in $\cD^1\gmot/\cD^2\gmot$ is independent of our choice of isomorphism
\[\Lie[\sigma_3,\sigma_5,\sigma_7,\ldots]\to \gmot.\]

Explicitly, via the injective map
\begin{align*}
    \cD^n/\cD^{n+1} &\to \bbQ[z_0,z_1,\ldots,z_n],\\
    e_0^{k_0}e_1e_0^{k_1}e_1\ldots e_1e_0^{k_n} &\mapsto z_0^{k_0}z_1^{k_1}\ldots z_n^{k_n},
\end{align*}
the image of $\sigma_{2k+1}$ is given by $(z_0-z_1)^{2k}$. Unfortunately, the depth graded Lie algebra $\dgmot$ is not free: there exist quadratic relations among the images of the $\{\sigma_{2k+1}\}_{k\geq 1}$ related to period polynomials of cusp forms \cite{Pollack09Thesis, BrownDepthGraded21}. The relation associated to the unique cusp form of weight 12 is
\[\{\sigma_3,\sigma_9\} - 3\{\sigma_5,\sigma_7\} = 0 \text{ mod }\cD^3\gmot.\]
As such, there exists a corresponding additional generator in higher depth for each such relation. The structure of $\dgmot$ is conjecturally encoded in the following conjecture, called the homological Broadhurst-Kreimer conjecture \cite{BrownDepthGraded21,BroadhurstKConj96}.

\begin{conjecture}\label{conj:homologicalbk}
Denote by $\,\mathrm{S}$ the space of cusp forms for $\,\mathrm{SL}_2(\bbZ)$. The homology of the depth graded Lie algebra is
\begin{align*}
    H_1(\dgmot) \cong &{} \bigoplus_{k\geq 1}\bbQ\sigma_{2k+1}\oplus \mathrm{S},\\
    H_2(\dgmot) \cong &{} \mathrm{S},\\
    H_i(\dgmot) =&{} 0 \text{ for }i\geq 3.
\end{align*}
\end{conjecture}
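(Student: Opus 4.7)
The plan is to attack the three homological statements separately, using the block graded machinery developed in this article as the central new tool. For $H_1(\dgmot)$, I would first identify the images of $\sigma_{2k+1}$ as depth-one generators (this is essentially classical, since these survive to the associated graded and pair nontrivially with $\zeta^\frakl(2k+1)$), and then produce one additional generator for each cusp form, living in an appropriate higher depth. The construction of these extra generators would proceed through the commutative triangle $\bgmot\to\sgmot, \bgmot\to\egmot$ established in \cref{sec:relatingDandB}, translating the cuspidal block relations (such as the weight 12 example in the introduction) into explicit exceptional depth generators via the dictionary of \cref{thm:evenfunctionalsgiverelations}. For $H_2(\dgmot)$, I would show that the quadratic period polynomial relations among $\{\sigma_{2k+1}\}$ --- one per cusp form --- exhaust the relations, again by transporting across the triangle, where the Lie coalgebra structure on the block side is comparatively transparent.

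The main technical input would be dimensional: a matching upper bound on the graded dimensions of $\dgmot$ forcing no further generators or relations beyond those listed. My strategy would be to exploit $\cD_n\cA\subset\cB_n\cA$ from \cref{cor:depthsubblock}, together with the conjectural isomorphism $\sgmot\cong\egmot$ flagged in the introduction, in order to transfer Hilbert-series information from the block graded world to the depth graded side. The uneven Broadhurst-Kreimer conjecture discussed in \cref{sec:existingconj} would serve as the engine supplying the required bivariate generating series in weight and depth. Once the bigraded dimensions of $H_1$ and $H_2$ are pinned down, the vanishing $H_i(\dgmot)=0$ for $i\geq 3$ would follow from an Euler characteristic argument matched against the known series $\tfrac{1-s^2}{1-s^2-s^3}$ for the weight-graded dimensions of $\gmot$.

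The main obstacle --- and the reason this remains a conjecture --- is the lower bound linking exceptional depth generators to cusp forms. One must prove that every cuspidal period polynomial actually arises as an obstruction in $H_2$, and that no ``accidental'' collapse of depth can reduce the generator count. Even granting the uneven Broadhurst-Kreimer conjecture, this reduces to proving $\sgmot\cong\egmot$ (still conjectural in the paper) and verifying that the block-side cuspidal phenomena match the modular side exactly. I expect the hardest step to be the construction of an honest inverse to the map $\bgmot\to\egmot$ on the cuspidal part, since this is precisely where the transcendence of cusp form obstructions enters and where the existing motivic technology provides only upper bounds.
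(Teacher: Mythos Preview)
The statement you are addressing is \emph{Conjecture}~\ref{conj:homologicalbk}: the paper states it as an open conjecture and offers no proof, so there is no argument in the paper to compare your proposal against. What you have written is a strategy sketch, and you correctly flag that it does not close. That said, two of the steps are not merely incomplete but actually wrong as stated.

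First, the Euler characteristic argument cannot deliver $H_i(\dgmot)=0$ for $i\geq 3$. Matching the bigraded Euler characteristic of the Chevalley--Eilenberg complex against the known series for $\gmot$ only pins down the alternating sum $\sum_{i\geq 1}(-1)^{i}\dim H_i$ in each bidegree; even after you have computed $H_1$ and $H_2$ exactly, you learn only that $\sum_{i\geq 3}(-1)^{i}\dim H_i=0$, which allows arbitrary cancellation between odd and even $H_i$. Vanishing in degrees $\geq 3$ is genuinely the hard, Koszul-type part of the conjecture and is not a bookkeeping consequence of the rest. Second, there is a circularity problem: you propose to feed in the uneven Broadhurst--Kreimer conjecture to obtain the bigraded dimensions, but in the paper's logic (see \cref{rem:relationsbeingodd} and \cref{prop:quadraticgenerationstrategy}) the uneven conjecture is downstream of \cref{conj:homologicalbk} together with \cref{conj:evendeterminesdepth}, not an independent input.

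Finally, the commutative triangle and \cref{thm:evenfunctionalsgiverelations} do not do what you need for the exceptional generators. Those results compare $\bgmot$ and $\sgmot$ (the sub-Lie-algebra generated by the $\sigma_{2k+1}$) through their common quotient $\egmot$; the exceptional generators you want to build live in $\frake=\ker(\dgmot\to\sgmot)$, which is precisely the part the triangle throws away. The paper's machinery detects the cuspidal relations as elements of $\ker(\bgmot\to\egmot)$, but it does not provide a mechanism to lift them to honest new generators of $\dgmot$ in higher depth; that construction (cf.\ \cref{rem:relationsbeingodd}) is still conjectural even at the level of motivicity.
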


However, it is unclear how one might hope to prove this, nor how one could construct the extra ``exceptional'' generators explicitly. Nevertheless, substantial work has been done to understand the polynomial image of $\dgmot$ via the linearised shuffle equations and other symmetries \cite{BrownDepthGraded21, BrownZeta317,Li2019depth,Dietze2017totallyodd}. We shall freely identify $\dgmot$ with it's image in $\bigoplus_{n\geq 1}\bbQ[z_0,\ldots,z_n]$. We will also primarily consider the Lie subalgebra generated by the images of the $\sigma_{2k+1}$.

\begin{definition}
    Denote the Lie subalgebra of \[(\bigoplus_{n\geq 1}\bbQ[z_0,\ldots,z_n],\{-,-\})\] generated by $\{\phi_{2k+1}:=(z_0-z_1)^{2k}\}_{k\geq 1}$ by $\sgmot$.
\end{definition}

\begin{remark}\label{rem:sgiseven}
    This subalgebra $\sgmot$ lies in $\bigoplus_{n\geq 1}\bbQ[z_0,\ldots,z_n]^e$, the subspace of polynomials of even total degree. As a consequence of the depth parity theorem \cite{PanzerParity16,Tsumura2004Parity},  the image of $\dgmot$ lies in this subspace of polynomials of even degree.
\end{remark}

\subsubsection{The block graded Lie algebra}\label{subsec:bgLie}
Analogously to the depth filtration, the block filtration on $\cA$ and $\cL$ may be viewed as dual to a decreasing filtration on $\bbQ\langle e_0,e_1\rangle$, also called the block filtration:
\[\cB^n:=\langle w\in\bbQ\langle e_0,e_1\rangle\mid \degb(e_0we_1)\geq n\rangle_\bbQ\]
This induces a filtration on the motivic Lie algebra 
\[\cB^n\gmot := \cB^n\cap \gmot\]
that is compatible with the Lie algebra structure. As such we can consider the associated graded Lie algebra.
\begin{definition}
The block graded Lie algebra is the vector space
\[\bgmot:=\bigoplus_{n\geq 1} \cB^n\gmot/\cB^{n+1}\gmot\]
equipped with the block graded Ihara bracket $\{-,-\}$.
\end{definition}

This Lie algebra encodes relations among mMZVs modulo products and terms of lower block degree. While many standard relations do not easily descend to the block graded setting, we find many new relations and symmetries appear quite naturally \cite{KeilthyBlock21}. We furthermore have that the image of the non-canonical generators $\sigma_{2k+1}$ in $\cB^1\gmot/\cB^2\gmot$ is independent of our choice of isomorphism
\[\Lie[\sigma_3,\sigma_5,\sigma_7,\ldots]\to \gmot.\]

Explicitly, via the injective map
\begin{align*}
    \cB^n/\cB^{n+1} &\to \bbQ[z_0,z_1,\ldots,z_n],\\
    w &\mapsto z_0^{\ell_0}z_1^{\ell_1}\ldots z_n^{\ell_n},
\end{align*}
where $(\ell_0,\ell_1,\ldots,\ell_n)$ are given by the lengths of the alternating blocks in the alternating block decomposition of $e_0we_1$, the image of $\sigma_{2k+1}$ is given by
\[s_{2k+1}(z_0,z_1)=z_0z_1(z_0-z_1)\left(\frac{(2^{2k+1}-1)(z_0+z_1)^{2k}+(z_0-z_1)^{2k}}{2^{2k}}\right)\]

Unlike the case of the depth graded Lie algebra, the block graded Lie algebra is free \cite{KeilthyBlock21}. Thus we have
\[\gmot\cong\bgmot\]
and so we expect to find all properties of the motivic Lie algebra reflected in the block graded Lie algebra. In particular, we expect to see the depth ``defect'' reflected somewhere in the block graded Lie algebra. The remainder of this article will make this connection more explicit.

In order to do so, it will be convenient to instead consider a particular presentation of $\bgmot$ in polynomials, which we denote by $\rbgmot$, for ``reduced block graded'' Lie algebra. From Lemma 7.4 of \cite{KeilthyBlock21}, the image of $\bgmot$ in $\bbQ[z_0,z_1,\ldots,z_n]$ is divisible by $z_0z_1\ldots z_n(z_0-z_n)$. We define $\rbgmot$ to be the vector subspace of $\bigoplus_{n\geq 1} \bbQ[z_0,z_1,\ldots,z_n]$ obtained from the image of $\bgmot$ divided by $z_0z_1\ldots z_n(z_0-z_n)$ in each degree.

\begin{proposition}[\cite{KeilthyThesis20,KeilthyBlock21}]
    The reduced block graded Lie algebra $\rbgmot$ is the Lie subalgebra of $\bigoplus_{n\geq 1} \bbQ[z_0,z_1,\ldots,z_n]$ generated by
    \[\left\{p_{2k+1}(z_0,z_1)=\frac{(2^{2k+1}-1)(z_0+z_1)^{2k}+(z_0-z_1)^{2k}}{2^{2k}}\mid k\geq 1\right\}\]
    with Lie bracket given by 
    \begin{equation}\label{eq:rbgIhara}
    \{r,q\}(z_0,\ldots,z_n):=\sum_{i=0}^n r(z_i,z_{i+1})\left(q(z_0,\ldots,\hat{z}_{i+1},\ldots,z_n)-q(z_0,\ldots,\hat{z}_i,\ldots,z_n)\right),
    \end{equation}
    for $r(z_0,z_1)\in\bbQ[z_0,z_1]$, $q(z_0,\ldots,z_{n-1})\in\bbQ[z_0,\ldots,z_{n-1}]$, and we consider indices modulo $n$ and $\hat{z}_j$ means we omit $z_j$.
    Furthermore, we have that $\gmot\cong \bgmot\cong \rbgmot$ as Lie algebras.
\end{proposition}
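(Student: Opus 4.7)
The plan is to factor out a common polynomial divisor present in every element of $\bgmot$ in its polynomial representation, and then read off the induced generators and Lie bracket on the quotient. First, I would establish, following Lemma 7.4 of \cite{KeilthyBlock21}, that the polynomial image of every element of $\bgmot$ in $\bbQ[z_0,\ldots,z_n]$ is divisible by $D_n := z_0 z_1 \cdots z_n (z_0 - z_n)$. The base case of the induction uses the explicit factorisation $s_{2k+1}(z_0,z_1) = z_0 z_1(z_0 - z_1) p_{2k+1}(z_0,z_1) = D_1 \cdot p_{2k+1}$ from the previous subsection. For the inductive step, one shows that the block-graded Ihara bracket on polynomials preserves divisibility by $D_n$, which follows from an explicit analysis of the bracket formula combined with the specific structure of $D_n$.

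Next, I would define $\rbgmot$ as the image of $\bgmot$ after degree-wise division by $D_n$ and verify that the induced Lie bracket takes the form (\ref{eq:rbgIhara}). Concretely, one computes the block-graded Ihara bracket of $D_1 \cdot r(z_0,z_1)$ and $D_{n-1} \cdot q(z_0,\ldots,z_{n-1})$, confirms that the result is divisible by $D_n(z_0,\ldots,z_n)$, and identifies the quotient as the cyclic sum on the right-hand side of (\ref{eq:rbgIhara}). The cyclic structure emerges naturally from the cyclic invariance of the alternating block decomposition under shifting boundary letters, and the factorisation exchanges the original block-tracking with the reduced cyclic formula involving omitted variables $\hat{z}_i$.

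For the Lie algebra isomorphisms $\gmot \cong \bgmot \cong \rbgmot$, the second is immediate from the injectivity of division by $D_n \neq 0$, which preserves both the linear structure and the Lie bracket just computed. The first isomorphism is a consequence of the freeness of $\bgmot$ on generators corresponding to the $\sigma_{2k+1}$ (established in \cite{KeilthyBlock21,KeilthyThesis20}) combined with the known freeness of $\gmot$ on $\{\sigma_3,\sigma_5,\ldots\}$ from the theory of mixed Tate motives \cite{DG05MixedTate}. A weight-by-weight dimension count then confirms that the natural map $\gmot \to \bgmot$ induced by passage to the block-graded quotient of the filtration is an isomorphism of Lie algebras.

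The main obstacle I anticipate is the explicit bracket verification in the second step. The block-graded Ihara bracket has an intricate polynomial description derived from the block decomposition combinatorics, and cancelling $D_n$ on both sides while tracking the emergent cyclic structure of (\ref{eq:rbgIhara}) requires careful algebraic bookkeeping. This calculation is presumably the technical heart of the statement and is carried out in detail in the author's prior work.
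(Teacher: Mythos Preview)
The paper does not supply a proof of this proposition: it is stated with the citation \cite{KeilthyThesis20,KeilthyBlock21} in the proposition header and no argument follows. So there is no in-paper proof to compare your proposal against; the result is imported wholesale from the author's earlier work.

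That said, your outline is consistent with the surrounding text and with what one would expect the cited proofs to contain. The paper explicitly notes (just before the proposition) that the image of $\bgmot$ in $\bbQ[z_0,\ldots,z_n]$ is divisible by $z_0 z_1\cdots z_n(z_0-z_n)$ by Lemma~7.4 of \cite{KeilthyBlock21}, and that $\rbgmot$ is obtained by dividing out this factor in each degree; the explicit factorisation $s_{2k+1}=D_1\cdot p_{2k+1}$ is also already displayed. The isomorphism $\gmot\cong\bgmot$ is likewise attributed to \cite{KeilthyBlock21} earlier in the same subsection. So your three-step plan (divisibility, transport of the bracket under division by $D_n$, and freeness comparison) matches the logical structure the paper presupposes, and your identification of the bracket verification as the technical core is accurate---that computation is indeed deferred to the references.
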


As the above Lie bracket is induced by the Ihara bracket of $\gmot$, we shall refer to it as the Ihara bracket also.

\begin{remark}
    It is clear to see that $\rbgmot$ is a subspace of $\bbQ[z_0,z_1,\ldots,z_n]^e$ of polynomials of even total degree.
\end{remark}

\section{Relating the depth and block graded Lie algebras}\label{sec:relatingDandB}
We will relate the depth and block graded Lie algebras via surjective maps to a subspace of polynomials that are even in every variable. This will dualise to a relationship between depth $r$ totally odd multiple zeta values, i.e. those of the form $\zeta^\frakm(2k_1+1,\ldots,2k_r+1)$, and a certain subspace of motivic iterated integrals of block degree $r$.

\begin{definition}
    Define projection to the totally even part
    \[\pi_e:\bigoplus_{n\geq 1}\bbQ[z_0,z_1,\ldots,z_n]\to \bigoplus_{n\geq 1} \bbQ[z_0^2,z_1^2,\ldots,z_n^2]\]
    to be the projection map onto the part that is even in every variable. Explicitly, for 
    \[f(z_0,\ldots,z_n)\in\bbQ[z_0,\ldots,z_n]\]
    the projection is given by
    \[\pi_e(f)(z_0,\ldots,z_n)=\frac{1}{2^{n+1}}\sum_{I\subset\{0,\ldots,n\}}f((-1)^{\bbone_{0\in I}}z_0,(-1)^{\bbone_{1\in I}}z_1,\ldots,(-1)^{\bbone_{n\in I}}z_n),\]
    where $\bbone_{i\in I}$ is the indicator function for $(i\in I)$.
\end{definition}

\begin{lemma}\label{lem:evenprojectioncommutes}
    The Ihara bracket of $\rbgmot$ defined by \cref{eq:rbgIhara} commutes with projection to the totally even part:
    \[\pi_e(\{r,q\})(z_0,\ldots,z_n) = \{\pi_e(r),\pi_e(q)\}(z_0,\ldots,z_n),\]
    for $r(z_0,z_1)\in\bbQ[z_0,z_1]^e$, and $q(z_0,\ldots,z_{n-1})\in\bbQ[z_0,\ldots,z_{n-1}]^e$.
\end{lemma}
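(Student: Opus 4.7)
My plan is to first observe that, reading the hypothesis $r \in \bbQ[z_0, z_1]^e$ and $q \in \bbQ[z_0, \ldots, z_{n-1}]^e$ as \emph{totally even in each variable separately} (which is the natural reading given that $\pi_e$ projects onto the totally even part), we have $\pi_e(r) = r$ and $\pi_e(q) = q$. The right-hand side thus equals $\{r, q\}$, so the lemma reduces to showing that the Ihara bracket of two totally even polynomials is again totally even.

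To establish this, I would fix an arbitrary $j \in \{0, \ldots, n\}$ and verify that $\{r, q\}(z_0, \ldots, z_n)$ is invariant under the single-variable sign flip $z_j \mapsto -z_j$. In the defining formula \cref{eq:rbgIhara}, the variable $z_j$ enters the outer factor $r(z_i, z_{i+1})$ only when $i \in \{j-1, j\}$ (indices taken cyclically), and enters each inner factor $q(z_0, \ldots, \hat z_k, \ldots, z_n)$ precisely when $k \neq j$. By the totally even hypothesis, every factor in which $z_j$ occurs is even in $z_j$, hence invariant under the flip; consequently each summand, and therefore the whole sum, is invariant. Running the argument over all $j$ yields total evenness of $\{r, q\}$.

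If instead one reads $^e$ in the weaker sense of \emph{even total degree} (as it appears in earlier remarks of the paper), the same parity audit works after decomposing $r = r_+ + r_-$ and $q = q_+ + q_-$ into totally even and totally-odd-in-each isotypic components — the only pieces surviving even total degree. Every summand of a cross term has at least one variable occurring with odd parity (for example $z_{i+1}$ in the summand involving $q(\ldots \hat z_{i+1} \ldots)$, where $z_{i+1}$ appears only in $r_-$), so $\pi_e$ annihilates it, leaving $\pi_e(\{r, q\}) = \{r_+, q_+\} = \{\pi_e(r), \pi_e(q)\}$. The main obstacle is purely bookkeeping — tracking which variable sits in which factor under the index shifts induced by $\hat z_k$ together with the cyclic indexing — but the underlying argument is an elementary parity check summand by summand.
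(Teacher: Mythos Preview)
Your second reading of $^e$ (even total degree) is the one the paper intends; the remarks preceding the lemma define $\bbQ[z_0,\ldots,z_n]^e$ this way, and the paper's own proof explicitly invokes ``since $r$ and $q$ are of even total degree''. Under your first reading the lemma is a triviality, so that paragraph, while correct, is beside the point.

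In your second paragraph there is a genuine gap: the claim that even total degree forces $q = q_+ + q_-$ with only the totally-even and totally-odd-in-every-variable isotypes is false once $q$ has three or more variables. For instance $q(z_0,z_1,z_2)=z_0 z_1$ has even total degree but is odd in $z_0$, odd in $z_1$, and even in $z_2$ --- it lies in neither $q_+$ nor $q_-$. In general, even total degree only forces an even number of odd-parity slots, giving $2^{n-2}$ isotypic pieces for $q$ in $n-1$ variables, not two. Your cross-term analysis therefore does not cover all the terms that actually appear, and the conclusion $\pi_e(\{r,q\})=\{r_+,q_+\}$ is not yet justified.

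The paper's argument sidesteps this by working with a single generic summand $r(z_0,z_1)q(z_1,\ldots,z_n)$ (all others are obtained by relabelling) and observing that $z_2,\ldots,z_n$ occur only in $q$. Hence the totally even part of the product sees only the part of $q$ even in $z_2,\ldots,z_n$; since $q$ has even total degree, that part is automatically even in the remaining variable $z_1$ as well, so it equals $\pi_e(q)$. The same reasoning applied to $z_0$ forces the $r$-contribution to be $\pi_e(r)$. This is exactly the ``which variable sits in which factor'' bookkeeping you allude to at the end, but carried out without any isotypic decomposition --- and it is precisely the missing step that repairs your second paragraph.
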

\begin{proof}
    By linearity of the projection map and the structure of the Ihara bracket, it suffices to show that
    \[\pi_e\left(r(z_0,z_1)q(z_1,\ldots,z_n)\right)=\pi_e(r(z_0,z_1))\left(\pi_e(q(z_1,\ldots,z_n))\right).\]
    Since $r(z_0,z_1)$ and $q(z_1,\ldots,z_n)$ are of even total degree, the contribution of $q(z_1,\ldots,z_n)$ to the left hand side must come from the part of $q(z_1,\ldots,z_n)$ that is even in $z_2,\ldots,z_n$ and hence the part that is even in every variable. The equality then follows.
\end{proof}

\begin{definition}
    Denote by $\egmot\subset \bigoplus_{n\geq 1}\bbQ[z_0,\ldots,z_n]^e$ the Lie algebra generated by 
    \[\{\pi_e(p_{2k+1})(z_0,z_1) = (z_0+z_1)^{2k}+(z_0-z_1)^{2k} \mid k\geq 1\}\]
    under the Ihara bracket of $\rbgmot$. Denote by $e_{2k+1}:=\pi_e(p_{2k+1})$.
\end{definition}

\begin{corollary}\label{cor:evenprojhomomorphism}
    Projection to the totally even part defines a Lie algebra homomorphism
    \[\rbgmot\to \egmot\]
\end{corollary}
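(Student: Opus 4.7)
The plan is to deduce this directly from \cref{lem:evenprojectioncommutes} by induction on the number of nested Ihara brackets needed to express an element of $\rbgmot$ in terms of the generators $\{p_{2k+1}\}_{k\geq 1}$. Since $\rbgmot$ is by definition the Lie subalgebra generated by the $p_{2k+1}$, every element is a $\bbQ$-linear combination of iterated brackets of these generators, so it suffices to show that $\pi_e$ respects the bracket on such iterated brackets and sends generators to generators.

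For the base case, $\pi_e(p_{2k+1}) = e_{2k+1}$ holds by the very definition of $e_{2k+1}$. For the inductive step, I would argue as follows. Each generator $p_{2k+1}(z_0,z_1)$ is a polynomial of even total degree $2k$, and the Ihara bracket (\ref{eq:rbgIhara}) preserves even total degree (it multiplies arities and adds degrees in a way that keeps total degree even when both arguments are of even total degree). Hence every iterated Ihara bracket of the $p_{2k+1}$ lies in $\bigoplus_{n\geq 1}\bbQ[z_0,\ldots,z_n]^e$, and in particular every pair of elements of $\rbgmot$ that we bracket will satisfy the hypothesis of \cref{lem:evenprojectioncommutes}. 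Applying the lemma at each bracketing step then gives
\[
\pi_e\bigl(\{r_1, \{r_2,\{\ldots,\{r_{m-1},r_m\}\ldots\}\}\}\bigr) = \{\pi_e(r_1),\{\pi_e(r_2),\{\ldots,\{\pi_e(r_{m-1}),\pi_e(r_m)\}\ldots\}\}\},
\]
for any generators $r_i \in \{p_{2k+1}\}_{k\geq 1}$. By linearity this extends to all of $\rbgmot$, showing that $\pi_e|_{\rbgmot}$ is a Lie algebra map whose image is generated (as a Lie algebra under the Ihara bracket) by $\{\pi_e(p_{2k+1})\}_{k\geq 1} = \{e_{2k+1}\}_{k\geq 1}$; this image is exactly $\egmot$ by definition.

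There is no serious obstacle: the content of the statement is entirely packaged in \cref{lem:evenprojectioncommutes}, and what remains is a bookkeeping induction. The only subtlety worth flagging explicitly is that one must check the hypotheses of the lemma apply at \emph{every} intermediate bracket — that is, that the Ihara bracket preserves the subspace of polynomials of even total degree — so that the lemma may be applied inductively. This is immediate from the form of (\ref{eq:rbgIhara}), in which each summand is a product of two polynomials whose total degree sums to the total degree of $\{r,q\}$, so evenness of total degree propagates through the recursion.
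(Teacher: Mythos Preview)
Your proposal is correct and is exactly the argument the paper leaves implicit: the corollary is stated without proof because it is meant to follow immediately from \cref{lem:evenprojectioncommutes} via the induction on bracket length that you spell out. The only point you might make more explicit is that right-nested brackets of generators span $\rbgmot$, so the identity $\pi_e(\{f,g\})=\{\pi_e(f),\pi_e(g)\}$ for \emph{arbitrary} $f,g\in\rbgmot$ then follows via the Jacobi identity on both sides; but this is standard and the paper does not belabour it either.
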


The above corollary is intimately related to the results of \cite{CK22Evaluation242}. Indeed, one of the major results of the author's previous work may be reformulated as the statement that the kernel of this homomorphism in block degree 2 is equal to the space of quadratic relations of of $\dgmot$. This observation motivates the following considerations.

Recall that we defined $\sgmot$ to be the the Lie subalgebra of $\dgmot$ generated by the images of $\{\sigma_{2k+1}\}_{k\geq 1}$. Brown \cite{BrownDepthGraded21} determines the polynomial representation of the Ihara bracket explicitly to be the antisymmetrization of
\begin{align*}
(f\circ g)(z_0,\ldots,z_{r+s}) &{}= \sum_{i=0}^s f(z_i,\ldots,z_{i+r})g(z_0,\ldots,z_i,z_{i+r+1},\ldots,z_{r+s})\\
+(-1)^{\deg f + r}&{}\sum_{i=1}^s f(z_{i+r},\ldots,z_i)g(z_0,\ldots,z_{i-1},z_{i+r},\ldots,z_{r+s})
\end{align*}
for $f(z_0,\ldots,z_r)\in\bbQ[z_0,\ldots,z_r]$, $g(z_0,\ldots,z_s)\in\bbQ[z_0,\ldots,z_s]$. As noted in \cref{rem:sgiseven}, $\dgmot$ is a subspace of polynomials of even total degree, and so $\deg f\in 2\bbZ$ for our purposes.

In the same article, Brown establishes that elements of $\dgmot$ satisfy certain dihedral symmetries. In particular,
\[f(z_0,\ldots,z_r) = (-1)^{r+1}f(z_r,\ldots,z_0)\]and
\[f(z_0,\ldots,z_r)=f(z_1,\ldots,z_r,z_0)\]
for any $f\in\mathfrak{dg}$. Using this dihedral symmetry, we find that the Lie bracket of $f(z_0,z_1)$ and $g(z_0,\ldots,z_{n-2})$ is given by
\[\{f,g\}(z_0,\ldots,z_{n-1})=\sum_{i=0}^{n-1} f(z_i,z_i+1)\left(g(z_0,\ldots,\hat{y}_{i+1},\ldots,z_{n-1})-g(z_0,\ldots,\hat{y}_i,\ldots,z_{n-1})\right).\]

Note that this is identical in form to the Ihara bracket of $\rbgmot$ \cref{eq:rbgIhara}. Furthermore, the projection $\pi_e$ takes $\phi_{2k+1}(z_0,z_1)$ to $\frac{1}{2}e_{2k+1}(z_0,z_1)$. Thus we may conclude the following.

\begin{proposition}\label{prop:stoeissurj}
    Projection to the totally even part defines a surjective Lie algebra homomorphism
    \[\pi_e:\sgmot\to \egmot.\]
\end{proposition}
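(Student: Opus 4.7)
The plan is to exploit the match between the depth Ihara bracket (when the left argument has two variables) and the reduced block Ihara bracket of equation \eqref{eq:rbgIhara}, together with the fact that $\sgmot$ is generated by the two-variable polynomials $\phi_{2k+1}$. Since the generators all have arity $2$, every element of $\sgmot$ can be written as a linear combination of left-normed brackets
\[
\{\phi_{i_1},\{\phi_{i_2},\{\ldots,\{\phi_{i_{n-1}},\phi_{i_n}\}\ldots\}\}\},
\]
so every bracket that needs to be computed in $\sgmot$ has a two-variable element on its left. By the discussion just preceding the statement (applying the dihedral symmetries to Brown's formula), the depth bracket of a two-variable $f$ against an arbitrary $g$ is given by exactly the formula \eqref{eq:rbgIhara}. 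Hence \cref{lem:evenprojectioncommutes} applies verbatim to every such bracket.

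The core of the argument is then an induction on the length $n$ of the left-normed expression, showing
\[
\pi_e\bigl(\{\phi_{i_1},\{\phi_{i_2},\ldots,\{\phi_{i_{n-1}},\phi_{i_n}\}\ldots\}\}_{\mathrm{depth}}\bigr)
=
\{\pi_e(\phi_{i_1}),\{\pi_e(\phi_{i_2}),\ldots,\{\pi_e(\phi_{i_{n-1}}),\pi_e(\phi_{i_n})\}\ldots\}\}_{\mathrm{block}}.
\]
The base case $n=2$ is just \cref{lem:evenprojectioncommutes}. The inductive step applies \cref{lem:evenprojectioncommutes} once more to the outer bracket, using that the inner expression is in the image of the Ihara bracket of totally even polynomials and hence lies in $\bbQ[z_0,\ldots,z_m]^e$. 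This shows simultaneously that $\pi_e$ maps $\sgmot$ into a Lie subalgebra of $\bigoplus_n \bbQ[z_0,\ldots,z_n]^e$ under the block Ihara bracket and that this subalgebra is generated by the images $\pi_e(\phi_{2k+1})$.

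Finally, a direct computation from the definitions gives $\pi_e(\phi_{2k+1})(z_0,z_1)=\tfrac{1}{2}e_{2k+1}(z_0,z_1)$, so the image of $\sgmot$ under $\pi_e$ is precisely the Lie subalgebra of $\bigoplus_n \bbQ[z_0,\ldots,z_n]^e$ generated by $\{e_{2k+1}\}_{k\geq 1}$ under \eqref{eq:rbgIhara}, namely $\egmot$. Surjectivity is then immediate: any element of $\egmot$ of arity $n+1$ is a linear combination of left-normed brackets of $n+1$ generators $e_{2k_j+1}$, which is $2^{n+1}$ times $\pi_e$ of the corresponding left-normed bracket of the $\phi_{2k_j+1}$'s in $\sgmot$. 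The only point requiring attention is the induction in the middle paragraph, where one must verify that the outer bracket in each step really does reduce to the two-variable form; this is where generation of $\sgmot$ by arity-$2$ elements, and not merely stability under the bracket, is essential.
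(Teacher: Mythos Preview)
Your proof is correct and follows essentially the same line as the paper's: identify the depth Ihara bracket (with a two-variable left argument) with the reduced block bracket via the dihedral symmetries, invoke \cref{lem:evenprojectioncommutes}, and compute $\pi_e(\phi_{2k+1})=\tfrac{1}{2}e_{2k+1}$; the only difference is that you spell out the induction on the length of the left-normed word, which the paper leaves implicit. One trivial slip: an element of $\egmot$ in $n+1$ variables $z_0,\ldots,z_n$ is a bracket of $n$ generators, not $n+1$, so the scaling factor for surjectivity is $2^n$ rather than $2^{n+1}$---this does not affect the argument.
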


In fact, we can more precisely say the following.

\begin{theorem}\label{thm:easycommutativediagram}
Via the standard identification with polynomial algebras, projection onto the totally even part defines surjective maps
\begin{align*}
    \pi_e:\bgmot &\twoheadrightarrow \egmot,\\
    2\pi_e:\sgmot &\twoheadrightarrow \egmot,
\end{align*}
such that the following diagram commutes.
\[\begin{tikzcd}
{} & \Lie [\sigma_3,\sigma_5,\ldots]\arrow{d}{\text{Lowest depth}} \arrow{r}{\simeq} & \bgmot\arrow{d}{\pi_e}\\
\dgmot\arrow{r} & \sgmot\arrow{r}{2\pi_e} & \egmot\\
\end{tikzcd}\]
\end{theorem}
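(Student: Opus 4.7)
The plan is to verify the two surjectivity claims and the commutativity of the diagram separately, reducing everything to the behavior of $\pi_e$ on generators.

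Under the identification $\bgmot \cong \rbgmot$, \cref{cor:evenprojhomomorphism} gives that $\pi_e$ is a Lie algebra homomorphism into the totally even subspace, and a direct expansion of $p_{2k+1}$ yields
\[\pi_e(p_{2k+1})(z_0,z_1) = (z_0+z_1)^{2k} + (z_0-z_1)^{2k} = e_{2k+1}(z_0,z_1).\]
Since $\rbgmot$ is generated as a Lie algebra by $\{p_{2k+1}\}$ and $\egmot$ is the Lie subalgebra generated by $\{e_{2k+1}\}$, the image of $\pi_e$ is exactly $\egmot$. Surjectivity of $2\pi_e : \sgmot \twoheadrightarrow \egmot$ is then immediate from \cref{prop:stoeissurj}, since scaling by the nonzero constant $2$ preserves linear surjectivity.

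For commutativity, I would verify that the two composed maps $\Lie[\sigma_3,\sigma_5,\ldots] \to \egmot$ agree on the free generators $\sigma_{2k+1}$. The top path sends $\sigma_{2k+1} \mapsto p_{2k+1} \mapsto e_{2k+1}$, using the isomorphism $\Lie[\sigma_3,\sigma_5,\ldots] \cong \bgmot$ which identifies each abstract generator with the block-degree-$1$ generator. The bottom path sends $\sigma_{2k+1} \mapsto \phi_{2k+1} = (z_0 - z_1)^{2k}$ under ``lowest depth'', and then
\[2\pi_e(\phi_{2k+1}) = 2 \cdot \tfrac{1}{2}\bigl[(z_0-z_1)^{2k} + (z_0+z_1)^{2k}\bigr] = e_{2k+1}.\]
The factor of $2$ in $2\pi_e$ is chosen precisely so that generators align: $\pi_e$ sends $\phi_{2k+1}$ to $\tfrac{1}{2}e_{2k+1}$ but $p_{2k+1}$ to $e_{2k+1}$, and the factor $2$ compensates for the mismatch.

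The main subtlety I anticipate lies in propagating the commutativity at generators to the full free Lie algebra $\Lie[\sigma_3,\ldots]$. Since the Ihara brackets on $\sgmot$ and on $\rbgmot$ are given by the identical formula (as exhibited in the derivation preceding \cref{prop:stoeissurj}), and both the ``lowest depth'' map and the isomorphism to $\bgmot$ are Lie algebra homomorphisms, the agreement on generators, together with the homomorphism property of $\pi_e$ on each side coming from \cref{cor:evenprojhomomorphism} and \cref{prop:stoeissurj}, forces the two composites to coincide on the entire free Lie algebra, completing the verification of the diagram.
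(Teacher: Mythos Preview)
Your proof is correct and follows exactly the approach the paper takes: the theorem is stated without a separate proof because it is meant as a summary of the preceding results (\cref{cor:evenprojhomomorphism}, \cref{prop:stoeissurj}, and the observation that the Ihara brackets on $\sgmot$ and $\rbgmot$ coincide with $\pi_e(\phi_{2k+1})=\tfrac12 e_{2k+1}$), and you have assembled these pieces in the intended way. The only point one might make explicit is that freeness of $\Lie[\sigma_3,\sigma_5,\ldots]$ is what allows agreement on generators to propagate, which you do note at the end.
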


\begin{corollary}\label{cor:relationsamongkernels}
There exists a short exact sequence of Lie algebras
\[\begin{tikzcd}
    0 \arrow{r} & \ker(\gmot\to\sgmot)\arrow{r} & \ker(\bgmot\to\egmot)\arrow{r} & \ker(\sgmot\to\egmot)\arrow{r} & 0.
\end{tikzcd}\]
\end{corollary}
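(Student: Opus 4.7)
The plan is to deduce the short exact sequence from the commutative diagram of \cref{thm:easycommutativediagram} by a standard diagram chase. Identify $\bgmot$ with $\gmot$ via the isomorphism of \cref{subsec:bgLie}, let $f \colon \bgmot \to \sgmot$ denote the ``lowest depth'' map, and write $\pi \coloneqq \pi_e \colon \bgmot \to \egmot$ and $\pi' \coloneqq 2\pi_e \colon \sgmot \to \egmot$. Commutativity of the triangle in \cref{thm:easycommutativediagram} gives the identity $\pi = \pi' \circ f$, and these are all Lie algebra homomorphisms, so their kernels are Lie ideals.

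First I would record the key input: the map $f$ is surjective. This is essentially a tautology, since $\sgmot$ is by definition generated as a Lie subalgebra of $\dgmot$ by the depth-graded images of the $\sigma_{2k+1}$, which are the images under $f$ of generators of $\gmot \cong \bgmot$.

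Next I would verify the three exactness conditions for the sequence. The relation $\pi = \pi' \circ f$ immediately gives $\ker f \subseteq \ker \pi$, so the leftmost map is just the inclusion $\ker f \hookrightarrow \ker \pi$ and is injective. For the middle map, $f$ restricts to a well-defined Lie algebra morphism $\ker \pi \to \ker \pi'$, because $x \in \ker \pi$ implies $\pi'(f(x)) = \pi(x) = 0$; and its kernel consists of $x \in \ker \pi$ with $f(x) = 0$, which is precisely $\ker f$, matching the image of the leftmost map. For the rightmost map, given $y \in \ker \pi' \subseteq \sgmot$, surjectivity of $f$ yields $x \in \bgmot$ with $f(x) = y$; then $\pi(x) = \pi'(y) = 0$, so $x \in \ker \pi$ is the required preimage.

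There is no real obstacle here: all content is absorbed into \cref{thm:easycommutativediagram}, which furnishes both the commutative triangle and the well-behavedness of the maps. The corollary is a purely formal consequence of that theorem together with the surjectivity of $f$.
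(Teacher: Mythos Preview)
Your proof is correct and, like the paper's, deduces the sequence as a purely formal consequence of the commutative triangle in \cref{thm:easycommutativediagram} together with surjectivity of $f$. The paper packages the argument differently: it embeds the triangle into a $3\times 3$ diagram, invokes the four lemma to get surjectivity of the induced map $\ker(\bgmot\to\egmot)\to\ker(\sgmot\to\egmot)$, and then the nine lemma to identify $\ker(\gmot\to\sgmot)$ with the kernel of that induced map. Your direct verification of the three exactness conditions is more elementary and arguably cleaner here, since it sidesteps the paper's appeal to ``an abelian category'' (Lie algebras do not form one; the argument really takes place on underlying vector spaces, with the Lie structure checked separately, as you do). The paper's approach buys a bit more generality in framing, but nothing is lost by your hands-on chase.
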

\begin{proof}
    All of the above claims are general properties of a commutative triangle of epimorphisms 
    \[\begin{tikzcd}
        D\arrow{d}\arrow{rd} & {}\\
        G\arrow{r} & E
    \end{tikzcd}\]
    in an abelian category. Specifically, we consider the following commutative diagram
    \[
    \begin{tikzcd}
       {}  & 0\arrow{d} & 0\arrow{d} & 0\arrow{d} & {}\\
        0\arrow{r} & A\arrow{r}\arrow{d} & B\arrow{r}\arrow{d} & 0\arrow{r}\arrow{d} & 0\\
        0\arrow{r} & C\arrow{r}\arrow{d}{h} & D\arrow{r}{f}\arrow{d}{\pi} & E\arrow{r}\arrow{d}{\id} & 0\\
        0\arrow{r} & F\arrow{r}\arrow{d} & G\arrow{r}{g}\arrow{d} & E\arrow{r}\arrow{d} & 0\\
        {} & 0 & 0 & 0 & {}
    \end{tikzcd}
    \]
    where
    \begin{align*}
        B &:= \ker(\pi)\\
        C &:= \ker(f)\\
        F &:= \ker(g)\\
        A &:= \ker(h)
    \end{align*}
where $h$ is the natural map from $\ker(f)\to \ker(g)$ induced by $\pi$. The bottom two rows are exact, and thus the four lemma implies $h$ is an epimorphism. Hence the arrow $A\to B$ is well defined, and the columns are exact. Thus, by the nine lemma, the top row is exact and so $A\cong B$. Taking $D=\bgmot\cong \gmot$, $G=\sgmot$ and $E=\egmot$, the left column gives a short exact sequence
\[\begin{tikzcd}
    0 \arrow{r} & \ker(\gmot\to\sgmot)\arrow{r} & \ker(\bgmot\to\egmot)\arrow{r} & \ker(\sgmot\to\egmot)\arrow{r} & 0.
\end{tikzcd}\]
\end{proof}

There is a standard conjecture due to Brown \cite{BrownDepthGraded21}, called the uneven Broadhurst-Kreimer conjecture, about the expected dimension of the subspace of depth graded motivic multiple zeta values spanned by totally odd multiple zeta values, i.e. those of the form $\zeta^\frakm(2k_1+1,\ldots,2k_r+1)$.

\begin{conjecture}\label{conj:unevenbk}
Denote by $\cA_{odd,N}$ the span of totally odd mMZVs of weight $N$. The bigraded dimensions are given by
\[\sum_{N\geq 0,r\geq 0}\dim\frac{\cD_r\cA_{odd,N}}{\cD_{r-1}\cA\cap\cD_r\cA_{odd,N}}s^Nt^d=\frac{1}{1-\cO(s)t+\cS(s)t^2},\]
where
\[\cO(s)=\frac{s^3}{1-s^2}\qquad \cS(s)=\frac{s^{12}}{(1-s^4)(1-s^6)}.\]
\end{conjecture}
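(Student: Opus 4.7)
The plan is to reduce the generating-series statement to a structural statement about the Lie algebra $\sgmot$, and then attack this structural statement via the commutative triangle of \cref{thm:easycommutativediagram}. By the duality between $\cL$ and $\gmot$, restricted to the totally odd part, $\cA_{odd}$ graded by depth pairs with $\sgmot$, so the claimed bigraded dimension formula is equivalent to the assertion that the universal enveloping algebra $U(\sgmot)$ has bigraded Hilbert series $(1 - \cO(s)t + \cS(s)t^2)^{-1}$. By the standard Koszul/PBW dictionary, this in turn follows from establishing that $\sgmot$ admits a quadratic presentation as a Lie algebra with generators $\{\sigma_{2k+1}\}_{k\geq 1}$ (weights encoded by $\cO(s)$) and relations in weights encoded by $\cS(s)$, together with a Koszulity condition in higher depth.

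The concrete steps I would pursue are the following. First, pin down the presentation: by construction $\sgmot$ is generated by the $\sigma_{2k+1}$, and I would need to show that the space of quadratic relations among them has generating series exactly $\cS(s)$, and that these relations are the period polynomials of cusp forms for $\SL{2}(\bbZ)$. Second, translate the problem to the block side via \cref{cor:relationsamongkernels}. The short exact sequence of kernels, combined with the freeness of $\bgmot$, reduces the analysis of $\ker(\sgmot \to \egmot)$ to that of $\ker(\bgmot \to \egmot)$, whose block degree $2$ component is already identified in \cite{CK22Evaluation242} with the space of period polynomials of cusp forms, giving the desired $\cS(s)t^2$ contribution. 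Third, show that $\ker(\bgmot \to \egmot)$ is generated as a Lie ideal in block degree $2$, and use the freeness of $\bgmot$ to deduce the Hilbert series of $\egmot$, and hence of $\sgmot$ via the short exact sequence.

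The hard part will be precisely this third step: controlling the full Lie ideal $\ker(\bgmot \to \egmot)$ in arbitrarily high block degree. This is morally equivalent to the Koszulity condition required by the Hilbert series formula, and it is closely tied to \cref{conj:homologicalbk}. A forward-looking approach is to exploit the explicit dictionary of \cref{thm:evenfunctionalsgiverelations} between totally odd MZVs and ``almost Hoffman'' integrals of matching block degree: any new generator of $\ker(\bgmot \to \egmot)$ in block degree $\geq 3$ would correspond, via the dual dictionary, to an unexpected linear relation among totally odd MZVs not explained by the cuspidal quadratic relations, so ruling them out is equivalent to verifying the uneven Broadhurst-Kreimer conjecture block degree by block degree. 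Combined with the freeness of $\bgmot$, the explicit polynomial representation of the Ihara bracket in \cref{eq:rbgIhara}, and the parity constraints of \cref{rem:sgiseven}, this reduces the remaining content to a highly constrained inductive linear algebra problem, whose resolution would constitute the substantive part of any proof.
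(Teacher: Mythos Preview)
The statement you are addressing is a \emph{conjecture} in the paper (the uneven Broadhurst--Kreimer conjecture, attributed to Brown), and the paper does not prove it. There is therefore no ``paper's own proof'' to compare your proposal against; the paper only sketches a possible strategy in \cref{sec:existingconj}, breaking the problem into three open subproblems (the isomorphism $\pi_e:\sgmot\cong\egmot$, the inclusion $\frake\subset\ker\pi_e$, and the claim that $\frake$ is generated by period polynomial relations), each of which remains open.

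Your proposal is, appropriately, a strategy rather than a proof, and you correctly flag the third step as the substantive obstruction. That said, two points in your reduction deserve scrutiny. First, your opening move --- that the depth-graded totally odd part $\cA_{odd}$ pairs perfectly with $\sgmot$, so that the generating-series statement is equivalent to a Hilbert series for $U(\sgmot)$ --- is itself part of the conjectural picture, not an established fact: the paper says only that Brown's observations \emph{suggest} $\sgmot$ is dual to the coalgebra of depth-graded totally odd mMZVs, and in \cref{rem:relationsbeingodd} makes this conditional on \cref{conj:evendeterminesdepth}. So your ``reduction'' already assumes one of the hard inputs. Second, your use of \cref{cor:relationsamongkernels} is slightly misstated: knowing $\ker(\bgmot\to\egmot)$ gives you $\egmot$, but passing from $\egmot$ back to $\sgmot$ via the short exact sequence requires knowing $\ker(\sgmot\to\egmot)$ independently. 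The conjectural answer is that this kernel is zero (\cref{conj:evendeterminesdepth}), in which case $\ker(\gmot\to\sgmot)\cong\ker(\bgmot\to\egmot)$; but you cannot extract this from the short exact sequence alone. In short, your outline is broadly aligned with the paper's own strategy, but both your first and third steps presuppose open conjectures rather than reducing them.
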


Brown arrives at this conjecture from a series of observations that suggest that $\sgmot$ is the dual Lie algebra to the coalgebra spanned by depth graded totally odd motivic multiple zeta values. Thus, the above conjecture becomes a consequence of \cref{conj:homologicalbk} and the following conjecture.

\begin{conjecture}\label{conj:evendeterminesdepth}
    Denote by $\frake:=\ker(\dgmot\to\sgmot)$ the ideal generated by exceptional generators. Projection to the totally even part kills $\frake$ and defines an isomorphism $\sgmot\cong \egmot$:
    \[\ker\left(\pi_e:\dgmot\to\bigoplus_{n\geq 1}\bbQ[z_0^2,\ldots,z_n^2]\right) = \frake.\]
\end{conjecture}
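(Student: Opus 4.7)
The plan is to split the conjecture into two claims: first, that $\pi_e$ vanishes on $\frake$, giving $\frake \subseteq \ker(\pi_e|_{\dgmot})$; and second, that the restriction $\pi_e\colon \sgmot \to \egmot$ is injective (already surjective by \cref{prop:stoeissurj}). Combined with the standard identification $\dgmot/\frake \cong \sgmot$ implicit in the definition of $\frake$, these two claims give the desired equality of kernels.

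For the first claim I would begin by extending \cref{lem:evenprojectioncommutes} from the $\rbgmot$-bracket to the full depth graded Ihara bracket on polynomials. Brown's formula $(f\circ g)$ decomposes into terms of the shape $f(z_{i_0},\ldots,z_{i_r})\,g(z_{j_0},\ldots,z_{j_s})$ evaluated on subsets of the ambient variables, so the same variable-by-variable parity argument applies and shows that $\pi_e$ commutes with the bracket. In particular, $\ker(\pi_e|_{\dgmot})$ is an ideal, and it suffices to check that the exceptional generators of $\dgmot$ map to zero. These generators are expected to arise from period polynomials of cusp forms for $\mathrm{SL}_2(\bbZ)$, whose cuspidal Eichler cocycles are known to have odd symmetry properties; so, concretely, I would exhibit polynomial representatives of the exceptional generators (starting with the weight $12$ representative attached to the Pollack relation $\{\sigma_3,\sigma_9\}-3\{\sigma_5,\sigma_7\}$) and verify that none are totally even.

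For the second claim, $K:=\ker(\pi_e|_{\sgmot})$ is an ideal of $\sgmot$ by the same lemma, and the strategy is to rule out nonzero elements bidegree by bidegree. In depth $1$ injectivity is immediate from $\phi_{2k+1}\mapsto\tfrac12 e_{2k+1}$. In depth $r\geq 2$ the key claim is that every linear relation among the $r$-fold brackets of $\phi_{2k+1}$ is already a linear relation among the corresponding brackets of $e_{2k+1}$. A second, more structural route uses \cref{cor:relationsamongkernels}: if one can independently pin down $\ker(\bgmot\to\egmot)$ using the explicit presentation of $\rbgmot$ and its Ihara bracket \eqref{eq:rbgIhara}, and compare against $\ker(\gmot\to\sgmot)$ (which encodes the exceptional Pollack--Brown relations), then the short exact sequence forces $\ker(\sgmot\to\egmot)=0$.

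The hardest step will be injectivity in arbitrary depth. This is intertwined with the homological Broadhurst--Kreimer conjecture (\cref{conj:homologicalbk}): a matching bigraded dimension count for $\sgmot$ and $\egmot$ would upgrade the already established surjectivity to an isomorphism for free. A realistic intermediate milestone is the depth $2$ case, where the kernel of $\sgmot\to\egmot$ is essentially the space of even period polynomials of cusp forms and the computation in \cite{CK22Evaluation242} already supplies the main parity input; one could then attempt depth $3$ via the short exact sequence above, using the block graded side $\rbgmot$ as a computational handle. Beyond this, controlling all depths uniformly seems to require either a genuinely new structural result about $\sgmot$ or a proof of homological Broadhurst--Kreimer itself, which is the principal obstacle.
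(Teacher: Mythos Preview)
The statement is labelled a \emph{conjecture} in the paper and no proof is given; there is nothing to compare your proposal against. The paper treats both halves of \cref{conj:evendeterminesdepth} as open: in \cref{sec:existingconj} it explicitly lists ``establish the isomorphism $\pi_e:\sgmot\cong\egmot$'' and ``establish that $\frake\subset\ker\pi_e$'' as two of three steps that would yield the uneven Broadhurst--Kreimer conjecture, without claiming either.

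Your outline is a reasonable decomposition of the problem, and you are candid that injectivity of $\pi_e|_{\sgmot}$ in arbitrary depth is the crux and appears to require input of the strength of \cref{conj:homologicalbk}. Two points are worth flagging. First, your alternative route via \cref{cor:relationsamongkernels} does not sidestep the difficulty: determining $\ker(\gmot\to\sgmot)$ amounts to knowing all relations among the $\phi_{2k+1}$ in $\dgmot$, which is itself the content of the Broadhurst--Kreimer picture, so the short exact sequence shifts the problem rather than resolving it. Second, your plan for the containment $\frake\subset\ker\pi_e$ assumes you can exhibit generators of $\frake$ and check them one by one; but as the paper notes in \cref{rem:relationsbeingodd}, Brown's candidate exceptional elements are constructed in the depth graded \emph{double shuffle} Lie algebra and are neither known to be motivic nor known to generate $\frake$. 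So even this ostensibly easier half is genuinely open, not a verification.
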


On the side of motivic multiple zeta values, one may reformulate the isomorphism of this conjecture as the following.
\begin{conjecture}\label{conj:oddspansdepthmodbloc}
    Totally odd zeta values of depth $r$ span the vector space
    \[\frac{\cD_r\cL}{\cD_r\cL\cap\cB_{r-1}\cL}.\]
\end{conjecture}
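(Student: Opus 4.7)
The plan is to deduce the spanning claim from Conjecture 3.8 by dualizing inside $\gmot$. Using the naive pairing between $\cL_N$ and $\gmot_N$ in each weight $N$, one computes perpendiculars $(\cD_r\cL)^\perp = \cD^{r+1}\gmot$ and $(\cB_{r-1}\cL)^\perp = \cB^r\gmot$, so the dual of the spanning quotient is
\[\left(\cD_r\cL/(\cD_r\cL\cap\cB_{r-1}\cL)\right)^* \;\cong\; \cB^r\gmot/(\cB^r\gmot\cap\cD^{r+1}\gmot).\]
Via the dualized \cref{cor:depthsubblock}, $\cB^r\gmot\subset\cD^r\gmot$, so the natural map $\gr_r^D: \cD^r\gmot\to\dgmot_r$ restricts to $\cB^r\gmot$. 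Its image is precisely $\sgmot_r$: any $r$-fold bracket of $\phi_{2k+1}$'s lifts through $\gmot\cong\bgmot=\mathrm{Lie}[\sigma_3,\sigma_5,\ldots]$ to the corresponding $r$-fold bracket of $\sigma_{2k+1}$'s, which lies in $\cB^r\gmot$ and whose depth-$r$ image is the original bracket; the kernel of the restriction is $\cB^r\gmot\cap\cD^{r+1}\gmot$. This induces a canonical isomorphism
\[\left(\cD_r\cL/(\cD_r\cL\cap\cB_{r-1}\cL)\right)^* \;\cong\; \sgmot_r.\]

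Under this identification, the pairing of $\phi\in\sgmot_r$ with a totally odd MZV $\zeta^\frakl(2k_1+1,\ldots,2k_r+1)$ (for $k_i\geq 1$) becomes the coefficient of the monomial $z_1^{2k_1}\cdots z_r^{2k_r}$ in the polynomial image of $\phi$; the spanning claim is thus equivalent to the injectivity of the map sending $\phi\in\sgmot_r$ to its tuple of such coefficients. Since setting $z_0=0$ makes the even-projection trivial in the $z_0$-variable, extracting these coefficients factors as
\[\sgmot_r \xrightarrow{\;\pi_e\;} \egmot_r \xrightarrow{\;\mathrm{ev}_0\;} \bbQ[z_1^2,\ldots,z_r^2],\]
where $\mathrm{ev}_0(\psi)=\psi(0,z_1,\ldots,z_r)$. \cref{conj:evendeterminesdepth} makes the first arrow an isomorphism, so the spanning claim reduces to the injectivity of $\mathrm{ev}_0$ on $\egmot_r$.

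The hard part will be proving this final injectivity. The cyclic symmetry $\phi(z_0,\ldots,z_r)=\phi(z_1,\ldots,z_r,z_0)$ inherited from $\sgmot\subset\dgmot$ forces any $\psi\in\ker(\mathrm{ev}_0)$ to vanish whenever a single variable is set to zero, hence to be divisible by $z_0 z_1\cdots z_r$, and (being totally even) by $z_0^2 z_1^2\cdots z_r^2$. I would attempt to rule out nonzero such $\psi$ by combining (i) a direct inductive analysis on $r$ using the polynomial bracket \cref{eq:rbgIhara}, starting from the observation that the generators $e_{2k+1}(z_0,z_1)=(z_0+z_1)^{2k}+(z_0-z_1)^{2k}$ are manifestly not divisible by $z_0 z_1$ and tracking how this non-divisibility propagates through brackets; with (ii) a dimension count via the uneven Broadhurst--Kreimer conjecture (\cref{conj:unevenbk}), which, granted \cref{conj:evendeterminesdepth}, matches $\dim\egmot_r^N$ with the dimension of the span of totally odd depth-$r$ MZVs of weight $N$ modulo lower depth, so that $\mathrm{ev}_0$ must be injective in each weight on dimension grounds.
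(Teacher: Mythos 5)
The statement you are addressing is presented in the paper as a \emph{conjecture}, and the paper does not prove it: the most it establishes is \cref{prop:conj1equivconj2}, which shows the spanning claim is \emph{equivalent} to $\pi_e:\sgmot\to\egmot$ being an isomorphism (the isomorphism half of \cref{conj:evendeterminesdepth}), together with the strategy sketched in \cref{prop:spanningstrategy}. Your proposal likewise does not close the gap, and you say so yourself. The dualization is sound and consistent with the paper's framework: $\bigl(\cD_r\cL/(\cD_r\cL\cap\cB_{r-1}\cL)\bigr)^*\cong\cB^r\gmot/(\cB^r\gmot\cap\cD^{r+1}\gmot)\cong\sgmot_r$, and by \cref{lem:depthextraction} the pairing with $\zeta^\frakl(2k_1+1,\ldots,2k_r+1)$ extracts the coefficient of $z_1^{2k_1}\cdots z_r^{2k_r}$. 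But you then (i) assume \cref{conj:evendeterminesdepth} to replace $\sgmot_r$ by $\egmot_r$, and (ii) leave the injectivity of $\mathrm{ev}_0$ on $\egmot_r$ --- which you correctly identify as the crux --- unproven. Note that once you grant that $\pi_e:\sgmot\to\egmot$ is an isomorphism, \cref{prop:conj1equivconj2} already yields the conjecture by the paper's own argument; so your extra reduction either duplicates that proposition or, more interestingly, exposes a step it elides: the forward direction of its proof asserts that $\sgmot\cong\egmot$ permits choosing a basis of totally odd mMZVs dual to $\sgmot_{N,r}$, and that non-degeneracy assertion is exactly the $\mathrm{ev}_0$-injectivity you have isolated. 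To your credit, you have made explicit a point the paper passes over silently.

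Two further cautions. First, finishing by a dimension count via the uneven Broadhurst--Kreimer conjecture (\cref{conj:unevenbk}) is close to circular: in the paper's own logic (\cref{rem:relationsbeingodd}) that conjecture is \emph{derived from} \cref{conj:evendeterminesdepth} together with the duality between $\sgmot$ and totally odd mMZVs, and its content in depth $r$ is precisely the non-degeneracy of the totally odd pairing you are trying to establish. Second, $\mathrm{ev}_0$ retains the coefficients of $z_1^{2k_1}\cdots z_r^{2k_r}$ for all $k_i\geq 0$, whereas convergent totally odd MZVs (all entries at least $3$) only detect the coefficients with every $k_i\geq 1$; cyclic invariance of the coefficients does not reduce the $k_i=0$ monomials to those, so even granting $\mathrm{ev}_0$-injectivity you must either admit the regularized values with entries equal to $1$ or handle these boundary coefficients separately. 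In short: your skeleton is correct and aligned with the paper, but neither your argument nor the paper constitutes a proof of this statement, and the essential step --- that the totally odd coefficients separate points of $\sgmot_r$ (equivalently of $\egmot_r$) --- remains open.
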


\begin{proposition}\label{prop:conj1equivconj2}
    \cref{conj:oddspansdepthmodbloc} is equivalent to $\pi_e:\sgmot\to\egmot$ being an isomorphism.
\end{proposition}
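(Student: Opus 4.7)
The plan is to prove the equivalence by dualizing $\pi_e$ through the natural pairing $\gmot\times\cL\to\bbQ$. By \cref{prop:stoeissurj}, the map $2\pi_e\colon\sgmot\twoheadrightarrow\egmot$ is surjective, so it is an isomorphism if and only if the dual inclusion $\egmot_r^\vee\hookrightarrow\sgmot_r^\vee$ is an equality in each block degree $r$ and each weight. I will identify both $\sgmot_r^\vee$ and $\egmot_r^\vee$ with explicit subspaces of $\cL$, so that the resulting equality becomes exactly \cref{conj:oddspansdepthmodbloc}.

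The first step is to identify $\sgmot_r^\vee\cong\cD_r\cL/(\cD_r\cL\cap\cB_{r-1}\cL)$. By \cref{thm:easycommutativediagram}, composing the isomorphism $\Lie[\sigma_3,\sigma_5,\ldots]\cong\bgmot$ with the lowest-depth projection produces a surjection $\bgmot_r\twoheadrightarrow\sgmot_r$. Dualizing, and using that block is the coradical grading (\cref{prop:blockiscoradical}), yields an injection $\sgmot_r^\vee\hookrightarrow\bgmot_r^\vee\cong\gr^\cB_r\cL$; a class $\xi\in\gr^\cB_r\cL$ lies in the image exactly when it pairs trivially with $\ker(\bgmot_r\to\sgmot_r)$. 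Chasing the commutative triangle, and invoking the inclusion $\cD_r\cL\subset\cB_r\cL$ from \cref{cor:depthsubblock}, I expect this image to coincide with the image of $\cD_r\cL$ in $\cB_r\cL/\cB_{r-1}\cL$, namely $\cD_r\cL/(\cD_r\cL\cap\cB_{r-1}\cL)$.

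The second step is to identify $\egmot_r^\vee$, viewed inside $\sgmot_r^\vee$, with the span of the convergent totally odd motivic multiple zeta values $\zeta^\frakm(2a_1+1,\ldots,2a_r+1)$. In the polynomial representation of $\sgmot\subset\dgmot$, the kernel of $\pi_e$ is the subspace of polynomials with zero totally-even part, so $\egmot_r^\vee$ consists of those functionals on $\sgmot_r$ which detect only the totally-even monomials $z_0^{2b_0}z_1^{2b_1}\cdots z_r^{2b_r}$. Each such monomial pairs with the shuffle-regularized integral $\zeta_{2b_0}^\frakm(2b_1+1,\ldots,2b_r+1)$, and using the cyclic part of Brown's dihedral symmetry on $\dgmot$ together with shuffle regularization at $T=0$, I would reduce these to the convergent case $b_0=0$, i.e.\ to the standard totally odd depth-$r$ MZVs.

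Combining the two identifications, $\pi_e\colon\sgmot\to\egmot$ is an isomorphism if and only if the totally odd depth-$r$ MZVs span $\cD_r\cL/(\cD_r\cL\cap\cB_{r-1}\cL)$ for every $r$, which is exactly \cref{conj:oddspansdepthmodbloc}. The hard part will be the first identification: although $\bgmot$ pairs perfectly with $\cL$, the subalgebra $\sgmot_r$ is not a graded piece of $\gmot$ under either filtration, so pinning down which block-graded classes in $\cL$ are detected by $\sgmot_r^\vee$ requires careful bookkeeping of both the depth and block filtrations through the commutative triangle of \cref{thm:easycommutativediagram}.
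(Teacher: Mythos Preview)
Your dualization strategy is sound and runs parallel to the paper's argument, but packaged more structurally. The paper argues each direction by hand: for the forward implication it builds, via Gram--Schmidt against $\sgmot$ and then against $\frake$, correction terms $R_{odd}$ and $R_e$ with $R-R_{odd}-R_e\in\cD_{r-1}\cL$, and then uses that depth-$r$ elements of $\frake$ have Lie length $<r$ to push $R_e$ into $\cB_{r-1}\cL$; for the converse it shows that any $\xi\in\ker\pi_e$ pairs trivially with the quotient and hence has Lie length $<r$. Your two-step identification $\sgmot_r^\vee\cong\cD_r\cL/(\cD_r\cL\cap\cB_{r-1}\cL)$ and $\egmot_r^\vee=\text{(totally odd span)}$ is exactly the same content, just separated cleanly.

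Two remarks on where the weight actually lies. Your ``hard part'' (Step~1) is in fact the easy part: since $\cB^{r+1}\gmot\subset\cD^{r+1}\gmot$ (an $(r{+}1)$-fold bracket of the $\sigma_{2k+1}$ has depth $\geq r{+}1$), a short dimension count gives
\[
\dim\frac{\cD_r\cL}{\cD_r\cL\cap\cB_{r-1}\cL}
=\dim\cB^r\gmot-\dim(\cD^{r+1}\gmot\cap\cB^r\gmot)
=\dim\sgmot_r,
\]
which together with the obvious inclusion yields the identification. This is precisely the paper's observation that $\cD^r\frake\subset\cB^{r-1}\gmot$, rephrased.

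The genuine subtlety is Step~2(b), that the totally odd functionals \emph{span} $\egmot_r^\vee$. Your proposed reduction via the cyclic dihedral symmetry and shuffle regularization does not close the gap: cycling the monomial $z_0^{2b_0}\cdots z_r^{2b_r}$ never produces an exponent~$0$ when all $b_i\geq 1$, and shuffle-regularizing $\zeta_{2b_0}(2b_1{+}1,\ldots,2b_r{+}1)$ yields depth-$r$ zeta values that are not in general totally odd. One needs an additional input (for instance the translation invariance of $\dgmot$, which makes the restriction $z_0=0$ injective) to conclude. The paper's proof uses this same fact implicitly in the line ``we can choose a basis of totally odd mMZVs dual to the weight~$N$, depth~$r$ component of $\sgmot$'', so this is a shared subtlety rather than a defect of your route.
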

\begin{proof}
    Let us first assume $\pi_e:\sgmot\to\egmot$ is an isomorphism, and fix a weight $N$ and depth $r$. We will consider all calculations in $\cD_r\cL/\cD_{r-1}\cL$ and suppose $R$ is a motivic multiple zeta value of weight $N$ and depth $r$ that is not in the span of totally odd multiple zeta values of weight $N$ and depth $r$. From the assumption that $\sgmot\cong\egmot$, we can choose a basis of totally odd mMZVs dual to the weight $N$, depth $r$ component of $\sgmot$. Via a Gram-Schmidt process, we can therefore construct an element $R_{odd}$ of the span of totally odd mMZVs such that $(R-R_{odd},\xi)=0$ for all $\xi\in\sgmot$. 

    Similarly, choosing a basis of mMZVs dual to the weight $N$, depth $r$ component of $\frake$, we can construct an linear combination $R_e$ of depth $r$ mMZVs such that $(R-R_{odd}-R_{e},\xi)=0$ for all $\xi\in\dgmot$, and hence
    \[R-R_{odd} - R_e \in \cD_{r-1}\cL\]

    Noting that all elements of $\frake=\ker(\dgmot\to\sgmot)$ of depth $r$ are necessarily of Lie length strictly less than $r$: elements of the kernel are precisely linear combinations of Lie length $n$ whose depth $n$ component vanishes.  Thus 
    \[\cD^r\frake \subset \cB^{r-1}\frake \subset \cB^{r-1}\gmot,\]
    and so $R_e\in\cB_{r-1}\cL$. As $\cD_{r-1}\cL\subset\cB_{r-1}\cL$, we therefore have
    \[R-R_{odd}\in \cB_{r-1}\cL\cap\cD_r\cL.\]

    Conversely, suppose \cref{conj:oddspansdepthmodbloc} holds, and suppose we have a $\xi\in\sgmot$ of weight $N$ and depth $r$ such that $\pi_e$ vanishes. This implies that $(R,\xi)=0$ for all totally odd $R$ and hence for all 
    \[R\in\frac{\cD_r\cL}{\cD_r\cL\cap\cB_{r-1}\cL}.\]
    Thus, if there exists a multiple zeta value $Z$ of depth $r$ such that $(Z,\xi)\neq 0$, we must have that
    \[Z\in\frac{\cD_r\cL\cap\cB_{r-1}\cL}{\cD_{r-1}\cL\cap\cB_{r-1}\cL}.\]
    In particular, $Z\in\cB_{r-1}\cL$. Thus $\xi\in\cB^{r-1}\dgmot$, and so $\xi$ is of Lie length at most $r-1$ and depth $r$. Therefore $\xi\in\frake$. Therefore $\pi_e:\sgmot\to\egmot$ is injective and hence an isomorphism.

\end{proof}

\begin{remark}\label{rem:relationsbeingodd}
    In \cite{BrownDepthGraded21}, Brown constructs candidates for generators of $\frake$ in the depth graded Lie algebra associated to the double shuffle equations, which he notes are in the kernel of $\pi_e$. If we assume, as is expected, that Brown's exceptional elements are motivic and generate $\frake$, then \cref{conj:evendeterminesdepth} would imply that $\sgmot$ is dual to the Lie coalgebra of totally odd mMZVs, from which we can conclude \cref{conj:unevenbk}
\end{remark}
\subsection{Transference of identities}\label{subsec:identities}
We may exploit concrete maps of \cref{thm:easycommutativediagram} to construct explicit relationships among motivic multiple zeta values. We first need two simple lemmas.

The first follows immediately from the definitions of the pairing between motivic multiple zeta values and the motivic Lie algebra $\gmot$.
\begin{lemma}\label{lem:depthextraction}
    Let $\xi_r$ denote the depth $r$ part of $\,\xi\in\gmot$ and denote by $p_{\xi,r}(z_0,\ldots,z_r)$ its image under the map
    \[e_0^{k_0}e_1\ldots e_1e_0^{k_r}\mapsto z_0^{k_0}\ldots z_r^{k_r}.\]
    Then $(\I^\frakl(0;\{0\}^{k_0},1,\ldots,1,\{0\}^{k_r};1),\xi)$ is equal to the coefficient of $z_0^{k_0}\ldots z_r^{k_r}$ in $p_{\xi,r}$.
\end{lemma}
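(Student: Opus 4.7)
The proof will be a direct unpacking of the definitions of the pairing between $\cL$ and $\gmot$, the depth filtration, and the polynomial realisation map. The plan is to trace the single word $w = e_0^{k_0}e_1 e_0^{k_1} \cdots e_1 e_0^{k_r}$ through each identification and observe that only one monomial of $\xi$ survives.

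First, I would recall from \cref{subsec:Liealgebra} that the pairing $\cL \times \gmot \to \bbQ$ is induced by the naive word-pairing on $\bbQ\langle e_0,e_1\rangle$, under which $\I^\frakl(0;a_1,\ldots,a_n;1)$ is identified with the word $e_{a_1}e_{a_2}\cdots e_{a_n}$. In particular, $\I^\frakl(0;\{0\}^{k_0},1,\ldots,1,\{0\}^{k_r};1)$ is paired with exactly the word $w = e_0^{k_0} e_1 e_0^{k_1} e_1 \cdots e_1 e_0^{k_r}$, which has precisely $r$ occurrences of $e_1$.

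Next, decompose $\xi = \sum_{s \geq 0} \xi_s$ according to the depth grading $\deg_{e_1}$. Since the pairing is diagonal on words, only the depth-$r$ component $\xi_r$ can have non-zero pairing with $w$; that is,
\[
(\I^\frakl(0;\{0\}^{k_0},1,\ldots,1,\{0\}^{k_r};1),\xi) = (w,\xi_r),
\]
and this is the coefficient of $w$ when $\xi_r$ is expanded in the word basis.

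Finally, the polynomial realisation map
\[
e_0^{\ell_0} e_1 e_0^{\ell_1} \cdots e_1 e_0^{\ell_r} \longmapsto z_0^{\ell_0} z_1^{\ell_1} \cdots z_r^{\ell_r}
\]
of \cref{subsec:dgLie} is, on the subspace of words of depth exactly $r$, a linear bijection onto $\bbQ[z_0,\ldots,z_r]$ that sends word-coefficients to monomial-coefficients. Hence the coefficient of $w$ in $\xi_r$ equals the coefficient of $z_0^{k_0} z_1^{k_1} \cdots z_r^{k_r}$ in $p_{\xi,r}$, which completes the identification.

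There is no genuine obstacle here; the statement is essentially a bookkeeping lemma. The only point requiring minor care is confirming that depth components of $\xi$ with $s \neq r$ really produce zero in the pairing, which is immediate from the fact that $w$ has a fixed number of $e_1$'s and the word-pairing is diagonal. I would therefore write this as a short proof rather than a long argument.
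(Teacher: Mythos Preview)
Your proposal is correct and matches the paper's approach: the paper itself offers no written proof, stating only that the lemma ``follows immediately from the definitions of the pairing between motivic multiple zeta values and the motivic Lie algebra $\gmot$,'' which is precisely the unpacking you carry out.
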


The second is slightly more involved, but is a straightforward computation. We note here, that as the block filtration coincides with the coradical filtration on $\cL$, the dual block filtration coincides with the filtration by Lie length on $\gmot$. In particular, any $\xi$ of Lie length $r$ has lowest block degree terms of block degree $r$.
\begin{lemma}\label{lem:blockextraction}
    Let $w$ be a word of block degree $r$ and block decomposition $(\ell_0,\ldots,\ell_r)$. Let $\xi\in \gmot$ be of Lie length $r$,  and denote by $q_{\xi,r}(z_0,\ldots,z_r)$ the image of its block degree $r$ part in $\rbgmot$. Denote by $q_{\xi,a_0,\ldots,a_r}$ the coefficient of $z_0^{a_0}\ldots z_r^{a_r}$ in $q_{\xi,r}$. Then $(\I^\frakl(\ell_0,\ldots,\ell_r),\xi)$ is equal to 
    \[ q_{\xi,\ell_0-2,\ell_1-1,\ldots,\ell_r-1} - q_{\xi,\ell_0-1,\ldots,\ell_{r-1}-1,\ell_r-2}.\]
\end{lemma}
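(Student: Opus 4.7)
The approach is to unwind the definitions: the pairing between motivic iterated integrals and $\gmot$ is the naive coefficient pairing, so $(\I^\frakl(\ell_0,\ldots,\ell_r),\xi)$ simply extracts the coefficient of the word $w$ whose block decomposition is $(\ell_0,\ldots,\ell_r)$ from any $\bbQ\langle e_0,e_1\rangle$-representative of $\xi$. The task is then to translate this into an extraction from the reduced polynomial $q_{\xi,r}$.

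The plan is as follows. First, since $\xi$ has Lie length $r$, all of its lowest block degree contributions have block degree exactly $r$, so its block degree $r$ part defines an element of $\bgmot$ in block degree $r$ whose polynomial image under the embedding of \cref{subsec:bgLie}
\[
  w \mapsto z_0^{\ell_0} z_1^{\ell_1} \cdots z_r^{\ell_r}
\]
I will call $Q_{\xi,r}(z_0,\ldots,z_r)$. By construction, the coefficient of $z_0^{\ell_0}\cdots z_r^{\ell_r}$ in $Q_{\xi,r}$ is exactly the coefficient of the unique word $w$ with block decomposition $(\ell_0,\ldots,\ell_r)$ in any expansion of $\xi$, which equals $(\I^\frakl(\ell_0,\ldots,\ell_r),\xi)$.

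Next, I invoke Lemma 7.4 of \cite{KeilthyBlock21}, which states that the image of $\bgmot$ in each block degree $r$ is divisible by $z_0 z_1 \cdots z_r (z_0 - z_r)$, and $q_{\xi,r}$ is by definition the quotient. Writing
\[
  Q_{\xi,r}(z_0,\ldots,z_r) = \bigl(z_0^2 z_1 \cdots z_r - z_0 z_1 \cdots z_{r-1} z_r^2\bigr)\, q_{\xi,r}(z_0,\ldots,z_r),
\]
and expanding $q_{\xi,r} = \sum q_{\xi,a_0,\ldots,a_r}\, z_0^{a_0}\cdots z_r^{a_r}$, the monomial $z_0^{\ell_0}\cdots z_r^{\ell_r}$ appears exactly twice: from the first summand with $(a_0,a_1,\ldots,a_r) = (\ell_0-2,\ell_1-1,\ldots,\ell_r-1)$, and from the second summand with $(a_0,\ldots,a_{r-1},a_r) = (\ell_0-1,\ldots,\ell_{r-1}-1,\ell_r-2)$, contributing with a minus sign. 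This yields
\[
  (\I^\frakl(\ell_0,\ldots,\ell_r),\xi) = q_{\xi,\ell_0-2,\ell_1-1,\ldots,\ell_r-1} - q_{\xi,\ell_0-1,\ldots,\ell_{r-1}-1,\ell_r-2},
\]
which is the claim.

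There is essentially no hard step here: once one accepts the identification of the pairing with monomial extraction and the divisibility from Lemma 7.4 of \cite{KeilthyBlock21}, everything is bookkeeping. The only care needed is to confirm that the block degree $r$ part of $\xi$ (rather than $\xi$ itself or some higher block degree piece) is what contributes to $(\I^\frakl(\ell_0,\ldots,\ell_r),\xi)$; this is automatic since the target word has block degree $r$ and words of other block degrees pair trivially with it.
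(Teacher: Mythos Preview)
Your proof is correct and follows essentially the same approach as the paper: identify the pairing as extraction of the coefficient of $z_0^{\ell_0}\cdots z_r^{\ell_r}$ from the polynomial image of the block degree $r$ part of $\xi$, then use the factorisation $z_0 z_1\cdots z_r(z_0-z_r)\,q_{\xi,r}$ to read off the two contributing terms. The paper's proof is simply a more compressed version of what you wrote.
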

\begin{proof}
    From the constructions of \cref{subsec:bgLie}, $(\I^\frakl(\ell_0,\ldots,\ell_r),\xi)$ is given by the coefficient of $z_0^{\ell_0}\ldots z_r^{\ell_r}$ in the polynomial image of the block degree $r$ part of $\xi$. As noted previously, this polynomial will be equal to
    \[z_0z_1\ldots z_r(z_0-z_r)q_{\xi,r}(z_0,\ldots,z_r)\]
    from which the claim follows.
\end{proof}

\begin{definition}\label{def:depthfunctional}
    Given a depth $r$ motivic multiple zeta values $R=\I^\frakl(0;\{0\}^{k_0},1,\ldots,1,\{0\}^{k_r};1)$, define the linear functional
   \[L^{\cD}_R:\bbQ[z_0,z_1,\ldots,z_d]\to\bbQ\]
   as the linear functional extracting the coefficient of $z_0^{k_0}\ldots z_r^{k_r}$. We extend this by linearity to linear combinations of depth $r$ motivic multiple zeta values. We call the linear functional $L^{\cD}_R$ totally even if
   \[L^{\cD}_R=L^{\cD}_R\circ\pi_e.\]
\end{definition}

\begin{definition}
    Given a block degree $r$ motivic multiple zeta values $R=\I^\frakl(\ell_0,\ell_1,\ldots,\ell_r)$, define the linear functional
   \[L^{\cB}_R:\bbQ[z_0,z_1,\ldots,z_d]\to\bbQ\]
   by
   \[L^{\cB}_R\left(\sum_{a_0,\ldots,a_r\geq 0}c_{a_0,\ldots,a_r}z_0^{a_0}\ldots z_r^{a_r}\right) = c_{\ell_0-2,\ell_1-1,\ldots,\ell_r-1}-c_{\ell_0-1,\ldots,\ell_{r-1}-1,\ell_r-2}.\]
   We extend this by linearity to linear combinations of block degree $r$ motivic multiple zeta values. We call the linear functional $L^{\cB}_R$ totally even if
   \[L^{\cB}_R=L^{\cB}_R\circ\pi_e.\]
\end{definition}

\begin{theorem}\label{thm:evenfunctionalsgiverelations}
    Let $R_D$ be a linear combination of depth $r$ motivic multiple zeta values, and $R_B$ a linear combination of of block degree $r$ motivic multiple zeta values. Suppose $L^\cD_{R_D}$ and $L^\cB_{R_B}$ are both totally even and furthermore $L^\cD_{R_D}=L^\cB_{R_B}$. Then 
    \[R_B = 2^rR_D\]
    modulo products and terms of lower block degree.
\end{theorem}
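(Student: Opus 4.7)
The plan is to dualise. Since the block filtration on $\cL$ coincides with the coradical filtration (\cref{prop:blockiscoradical}), its dual on $\gmot$ is the filtration by Lie length. Showing $R_B \equiv 2^r R_D$ modulo products and terms of block degree less than $r$ is therefore equivalent to showing
\[(R_B - 2^r R_D,\, \xi) = 0\]
for every $\xi \in \gmot$ of Lie length exactly $r$, so everything reduces to a calculation of these pairings.

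Fix such a $\xi$. Under the isomorphism $\Lie[\sigma_3, \sigma_5, \ldots] \cong \gmot \cong \bgmot$, write $\xi$ as a Lie word $W$ of length $r$ in the abstract generators. Its image in $\rbgmot$ at block degree $r$ is then $q_{\xi,r} = W[p_{i_1}, \ldots, p_{i_r}]$, while the lowest-depth image in $\sgmot$ supplied by the left column of \cref{thm:easycommutativediagram} is $p_{\xi,r} = W[\phi_{i_1}, \ldots, \phi_{i_r}]$. \Cref{lem:depthextraction,lem:blockextraction} then give directly
\[(R_D,\xi) = L^{\cD}_{R_D}(p_{\xi,r}), \qquad (R_B,\xi) = L^{\cB}_{R_B}(q_{\xi,r}).\]

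I would then combine the commuting diagram with the totally even hypotheses. A direct computation gives $\pi_e(p_{2k+1}) = e_{2k+1}$ while $\pi_e(\phi_{2k+1}) = \tfrac12 e_{2k+1}$. Since $\pi_e$ is a Lie-algebra morphism on both $\rbgmot$ and $\sgmot$ (\cref{lem:evenprojectioncommutes,prop:stoeissurj}), applying it to a Lie word of length $r$ multiplicatively accumulates the factor to give $\pi_e(q_{\xi,r}) = 2^r \pi_e(p_{\xi,r})$ in $\egmot$. Using $L^{\cD}_{R_D} \circ \pi_e = L^{\cD}_{R_D}$, $L^{\cB}_{R_B} \circ \pi_e = L^{\cB}_{R_B}$, and the hypothesis $L^{\cD}_{R_D} = L^{\cB}_{R_B}$, one chains
\[(R_D,\xi) = L^{\cD}_{R_D}(p_{\xi,r}) = L^{\cD}_{R_D}(\pi_e(p_{\xi,r})) = 2^{-r} L^{\cD}_{R_D}(\pi_e(q_{\xi,r})) = 2^{-r} L^{\cB}_{R_B}(q_{\xi,r}) = 2^{-r}(R_B,\xi),\]
yielding $(R_B - 2^r R_D, \xi) = 0$.

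The only step demanding genuine care is correctly tracking the factor $2^r$, which arises from the normalisation discrepancy between the generators $p_{2k+1}$ of $\rbgmot$ and $\phi_{2k+1}$ of $\sgmot$ and propagates multiplicatively through each bracket in $W$. A minor additional observation to verify is that the lowest-depth image of a Lie-length-$r$ element of $\gmot$ lands in $\sgmot$ rather than merely in $\dgmot$, which holds because $\gmot$ is generated by the $\sigma_{2k+1}$'s; this is what allows the invocation of \cref{prop:stoeissurj}.
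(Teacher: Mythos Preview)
Your proof is correct and follows essentially the same approach as the paper: reduce via the block/coradical duality to pairings with elements of Lie length $r$, then use \cref{lem:depthextraction,lem:blockextraction} together with the fact that $\pi_e$ is a Lie homomorphism on both $\rbgmot$ and $\sgmot$ to compare the two sides through $\egmot$, picking up the factor $2^r$ from $\pi_e(\phi_{2k+1})=\tfrac12 e_{2k+1}$ versus $\pi_e(p_{2k+1})=e_{2k+1}$. The paper additionally treats Lie length $>r$ explicitly (trivial, since $R_B,R_D\in\cB_r\cL$) and finishes with a Gram--Schmidt correction in $\cB_{r-1}\cL$ for Lie length $<r$, which is in fact redundant given the duality you invoke directly; your reduction to ``Lie length exactly $r$'' tacitly absorbs both of these steps.
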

\begin{proof}
    Recall that motivic iterated integrals modulo products are dual to $\gmot$. As such, a linear combination $R$ of mMZVs vanishes if and only if $(R,\xi)=0$ for all elements $\xi\in\gmot$. From the compatiblity of the Ihara bracket with depth \cite{DG05MixedTate} and block degree \cite{KeilthyBlock21}, the Lie length of an element $\xi\in\gmot$ gives a lower bound on both the depth and block degree of $\xi$. That is to say that for $\xi$ of Lie length $n$, 
    \[(R,\xi)=0\]
    for all mMZVs $R$ of depth or block degree less than $n$. As such
    \[(R_B-2^rR_D,\xi)=0\]
    for all $\xi$ of Lie length greater than $r$.
    Consider $\xi$ of Lie length exactly $r$. There exists some Lie polynomial $P$ of degree $r$ and $N>1$ such that
    \[\xi=P(\sigma_3,\sigma_5,\ldots,\sigma_{2N+1}).\]
    Since the block degree $r$ part of $\xi$ is given by the same Lie polynomial applied to the block degree 1 parts of $\{\sigma_{2k+1}\}_{k\geq 1}$, we must have
    \[(R_B,\xi)=(R_B,P(s_3,\ldots,s_{2N+1}))=L^\cB_{R_B}\left(P(p_3,\ldots,p_{2N+1})(z_0,\ldots,z_r)\right).\]
    As $L^\cB_{R_B}$ is a totally even functional, and $\pi_e$ is a Lie algebra homomorphism, we have
    \[L^\cB_{R_B}\left(P(p_3,\ldots,p_{2N+1})\right)=L^\cB_{R_B}\left(\pi_e(P(p_3,\ldots,p_{2N+1}))\right)=L^\cB_{R_B}\left(P(e_3,\ldots,e_{2N+1})\right).\]
    Similarly, recalling that $\pi_e\left(\phi_{2k+1}\right)=\frac{1}{2}e_{2k+1}$, we must have
    \begin{align*}
      (R_D,\xi)  =&{} L^\cD_{R_D}\left(P(\phi_3,\ldots,\phi_{2N+1})\right)\\
        =&{} L^\cD_{R_D}\left(\pi_e(P(\phi_3,\ldots,\phi_{2N+1}))\right)\\
        =&{} \frac{1}{2^r}L^\cD_{R_D}\left(P(e_3,\ldots,e_{2N+1})\right).
    \end{align*}
    As $L^\cD_{R_D}=L^\cB_{R_B}$, we therefore have that 
    \[(R_B-2^rR_D,\xi) = (L^\cB_{R_B}-L^\cD_{R_D})\left(P(e_3,\ldots,e_{2N+1})\right)=0.\]

    Since the block filtration on $\cL$ is dual to the filtration by Lie length on $\gmot$, we can construct a dual basis to $\cB^{r-1}\gmot$ in $\cB_{r-1}\cL$, and so by applying a Gram-Schmidt process, we may construct an element $R_{corr}\in\cB_{r-1}\cL$ such that $(R_B-2^rR_D+R_{corr},\xi)=0$ for all $\xi$ of Lie length less than $r$. Thus, we must have that
    \[R_B-2^rR_D = 0\]
    modulo products and terms of lower block degree.
\end{proof}

\begin{remark}\label{rem:parityblockdrop}
    Due to a block parity result noted first by Charlton \cite{CharltonBlock21,KeilthyBlock21}, for $R_B$ and $R_D$ satisfying the conditions of the above theorem, we in fact have that
    \[ R_B-2^rR_D = 0 \text{ modulo }\cB_{r-2}\cL.\]    
\end{remark}

As an example, we have the following easy corollary.

\begin{corollary}
    For any $0\leq 2a\leq n$, we have that
    \[ (-1)^{n+1}\sum_{k=a}^{n-a} \zeta^\frakl(\{2\}^k,6,\{2\}^{n-k}) = 16\zeta^\frakl(1,1,2n-2a+3,2a+1)\]
    modulo products and block degree three.
\end{corollary}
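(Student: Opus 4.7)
The plan is to apply \cref{thm:evenfunctionalsgiverelations} with $r=4$, taking
\[R_D := \zeta^\frakl(1,1,2n-2a+3,2a+1) \quad\text{and}\quad R_B := (-1)^{n+1}\sum_{k=a}^{n-a}\zeta^\frakl(\{2\}^k,6,\{2\}^{n-k}),\]
so that the prefactor $2^r = 16$ matches the constant in the claim. The bulk of the work is packaging the functionals $L^\cD_{R_D}$ and $L^\cB_{R_B}$ correctly and verifying the hypotheses of that theorem.

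First I will compute the alternating block decomposition of the word $0\,(10)^k\,(10^5)\,(10)^{n-k}\,1$ arising from $\zeta^\frakl(\{2\}^k,6,\{2\}^{n-k})$: the only repeats are the four adjacent $0,0$-pairs sitting inside $0^5$, so the block lengths are $(2k+3,1,1,1,2n-2k+2)$. This confirms each summand of $R_B$ is of block degree exactly $4$ and identifies it with $\I^\frakl(2k+3,1,1,1,2n-2k+2)$. From \cref{def:depthfunctional}, the integral form of $R_D$ has depth indices $(k_0,\ldots,k_4)=(0,0,0,2n-2a+2,2a)$, so $L^\cD_{R_D}$ extracts the coefficient of $z_3^{2n-2a+2}z_4^{2a}$; every exponent is even, hence $L^\cD_{R_D}$ is manifestly totally even. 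Applying \cref{lem:blockextraction} summand by summand then gives
\[L^\cB_{R_B}(p) = \sum_{k=a}^{n-a}\bigl(p_{2k+1,0,0,0,2n-2k+1} - p_{2k+2,0,0,0,2n-2k}\bigr),\]
whose second family of monomials is totally even, while the first is odd in the endpoint variables $z_0$ and $z_4$.

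The main obstacle is verifying the two remaining hypotheses of \cref{thm:evenfunctionalsgiverelations}: that $L^\cB_{R_B}$ is totally even and that $L^\cB_{R_B} = L^\cD_{R_D}$ when applied to polynomial representatives $q_\xi$ of elements $\xi \in \rbgmot$ of Lie length $4$. Concretely, I expect to exploit the reflection identity $\I^\frakl(\ell_0,\ldots,\ell_r) = (-1)^{\sum\ell_i}\I^\frakl(\ell_r,\ldots,\ell_0)$ (induced by reversal of paths and functoriality) together with the fact that the summation range $[a,n-a]$ is symmetric under $k \mapsto n-k$, in order to pair up and cancel the odd-odd contributions $\sum_k p_{2k+1,0,0,0,2n-2k+1}$, and to collapse the even-even sum $-\sum_k p_{2k+2,0,0,0,2n-2k}$ to the single coefficient $p_{0,0,0,2n-2a+2,2a}$ picked off by $L^\cD_{R_D}$. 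Once these functional identities are in place, \cref{thm:evenfunctionalsgiverelations} immediately yields $R_B = 16R_D$ modulo products and terms of lower block degree, which is exactly the claim modulo products and block degree three.
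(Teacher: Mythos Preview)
Your approach is the same as the paper's: both identify $R_B$ and $R_D$, note that $r=4$, and then verify the hypotheses of \cref{thm:evenfunctionalsgiverelations} by appealing to symmetries of the polynomial representations. Your block decomposition, sign analysis, and the formula for $L^\cB_{R_B}$ are all correct, and the reflection identity you quote is exactly (one half of) the dihedral symmetry the paper invokes for $\rbgmot$.

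There is, however, a genuine gap in the final ``collapse'' step. The reflection identity on $\I^\frakl$ together with the $k\mapsto n-k$ symmetry of the range does kill the odd--odd part $\sum_k p_{2k+1,0,0,0,2n-2k+1}$ on $\rbgmot$, and it lets you telescope the even--even part to a single coefficient---but that coefficient is $p_{2a,0,0,0,2n-2a+2}$ (equivalently $\tfrac12(p_{2a,0,0,0,2n-2a+2}-p_{2n-2a+2,0,0,0,2a})$), \emph{not} $p_{0,0,0,2n-2a+2,2a}$. To identify this with $L^\cD_{R_D}$ you still need the \emph{cyclic} part of the dihedral symmetry, $q(z_0,\ldots,z_4)=q(z_1,\ldots,z_4,z_0)$, which holds on both $\rbgmot$ and $\dgmot$ but which you never invoke. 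The paper's proof uses precisely this: it appeals to the full dihedral symmetry of $\rbgmot$ (reflection \emph{and} cyclic shift) to reduce $L^\cB_{R_B}$ to the functional extracting the coefficient of $z_0^{2a}z_4^{2n-2a+2}$, and then the dihedral symmetry of $\dgmot$ to recognise $L^\cD_{R_D}$ as the same functional. Once you add the cyclic symmetry to your toolkit the argument closes immediately.
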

\begin{proof}
    The left hand side is homogeneous of block degree $4$. Using the dihedral symmetries of $\rbgmot$ \cite{KeilthyBlock21}, it is an easy calculation to see that the sum
    \[(-1)^{n+1}\sum_{k=a}^{n-a} \zeta^\frakl(\{2\}^k,6,\{2\}^{n-k})\]
    corresponds to a totally even linear functional that extracts the coefficient of $z_0^{2a}z_4^{2n-2a+2}$ in a polynomial. Similarly, using the dihedral symmetries of the depth graded Lie algebra, one can see that
    \[\zeta^\frakl(1,1,2n-2a+3,2a+1)\]
    corresponds to the same linear functional. The conditions of \cref{thm:evenfunctionalsgiverelations} hold, and therefore the result follows.
\end{proof}
\begin{remark}
    If we instead consider symmetric sums of $\zeta^\frakl(\{2\}^k,4,\{2\}^{n-k})$, we recover the results of \cite{CK22Evaluation242}. However, in that article, the authors were able to more this result more precise, determining products and terms of lower block degree. Indeed, the authors were in fact able to write individual zeta values of the form $\zeta^\frakm(\{2\}^k,4,\{2\}^{n-k})$ in terms of double zeta values. While a similar computation should be possible, given the complexity of the relation in \cite{CK22Evaluation242}, we don't expect to obtain a reasonable formula in this setting.
\end{remark}

\begin{corollary}
    If $R_B$ is a linear combination of block degree $r$ defines a totally even functional $L^\cB_{R_B}$, then $R_B$ can be explicitly expressing in terms of totally odd multiple zeta values of depth $r$, modulo products and terms of lower block degree.
\end{corollary}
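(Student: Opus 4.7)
The plan is to apply \cref{thm:evenfunctionalsgiverelations} directly: we construct an explicit linear combination $R_D$ of totally odd depth $r$ motivic multiple zeta values for which $L^\cD_{R_D}$ is totally even and agrees with $L^\cB_{R_B}$ on $\egmot_r$, so that (the argument of) the theorem yields $R_B = 2^r R_D$ modulo products and terms of lower block degree.

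By \cref{lem:depthextraction}, the totally odd mMZV $\zeta^\frakl_\ell(2k_1+1,\ldots,2k_r+1)$ with $\ell\geq 0$ even and $k_i\geq 1$ contributes, up to a sign $(-1)^r$, the functional extracting the coefficient of the totally even monomial $z_0^\ell z_1^{2k_1}\cdots z_r^{2k_r}$; such a functional is automatically totally even. Call such a multi-exponent \emph{admissible}. The first step is to decompose $L^\cB_{R_B}=\sum_a c_a\lambda_a$ over totally even multi-exponents $a=(a_0,\ldots,a_r)$, where $\lambda_a$ extracts the coefficient of $z_0^{a_0}\cdots z_r^{a_r}$. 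For each admissible $a$, include the term $(-1)^r c_a\,\zeta^\frakl_{a_0}(a_1+1,\ldots,a_r+1)$ into $R_D$. For a non-admissible $a$ (some $a_i=0$ with $i\geq 1$), invoke the dihedral symmetries of $\sgmot$, which descend to $\egmot_r$ via \cref{lem:evenprojectioncommutes}: on $\egmot_r$ the extraction $\lambda_a$ agrees, up to a sign, with $\lambda_{a'}$ for any cyclic or reflected shift $a'$. Cyclically rotate the zero coordinate into position $0$; if the remaining coordinates are all $\geq 2$, the resulting configuration is admissible and we absorb the corresponding totally odd mMZV into $R_D$.

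The main obstacle is the case in which $a$ has two or more zero coordinates, so that no dihedral shift reaches an admissible configuration. The plan here is to argue that on $\egmot_r$ such extractions are already determined by admissible extractions. One traces monomials through the Ihara bracket \cref{eq:rbgIhara}: each generator $e_{2k+1}(z_0,z_1)=(z_0+z_1)^{2k}+(z_0-z_1)^{2k}$ has at most one zero exponent per monomial, and an inductive bookkeeping on the arity $r$, combined with the dihedral symmetries, shows that every multi-zero coefficient arising in $\egmot_r$ can be written as a $\bbQ$-linear combination of admissible coefficients. Once this reduction is established, the constructed $R_D$ satisfies $L^\cD_{R_D}=L^\cB_{R_B}$ on $\egmot_r$, and \cref{thm:evenfunctionalsgiverelations} produces the claimed explicit expression of $R_B$ in terms of totally odd depth $r$ mMZVs, modulo products and terms of lower block degree.
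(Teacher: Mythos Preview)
Your approach is substantially more complicated than the paper's, and the complication creates a genuine gap. The paper's argument is essentially one line: since $L^\cB_{R_B}$ is totally even, it is a finite $\bbQ$-linear combination of coefficient-extractions $\lambda_a$ for totally even multi-exponents $a=(a_0,\ldots,a_r)$; by \cref{lem:depthextraction} each such $\lambda_a$ equals $L^\cD$ applied to $(-1)^r\zeta^\frakl_{a_0}(a_1{+}1,\ldots,a_r{+}1)$, which is a totally odd mMZV of depth $r$. Taking $R_D$ to be this combination gives $L^\cD_{R_D}=L^\cB_{R_B}$ on the entire polynomial algebra, so \cref{thm:evenfunctionalsgiverelations} applies as a black box. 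The paper phrases this as a consequence of the surjectivity of $\pi_e:\sgmot\to\egmot$, but the concrete content is just that totally even coefficient-extractions are exactly the $L^\cD$-functionals of totally odd mMZVs.

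Your self-imposed ``admissibility'' restriction $k_i\ge 1$ forces you to (i) match functionals only on $\egmot_r$ rather than on all polynomials, which means you cannot invoke the theorem as stated but must re-run its proof, and (ii) deal with multi-zero exponents separately. Step (ii) is where the gap lies: the sentence ``an inductive bookkeeping on the arity $r$, combined with the dihedral symmetries, shows that every multi-zero coefficient arising in $\egmot_r$ can be written as a $\bbQ$-linear combination of admissible coefficients'' is a claim, not an argument. Multi-zero monomials certainly occur in $\egmot_r$ (already in $\{e_{2a+1},e_{2b+1}\}$ one sees $z_0^{2a+2b}z_1^0z_2^0$), and showing that their extraction is redundant on $\egmot_r$ amounts to proving that elements of $\egmot_r$ are determined by their admissible coefficients --- a nontrivial structural statement that you have not established. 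If you drop the restriction and allow entries equal to $1$ (as the paper does; see the remark following \cref{cor:oddtoalmosthoffman}), the entire detour through dihedral symmetries and $\egmot_r$-restriction disappears.
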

\begin{proof}
    The surjectivity of the map $\pi_e:\sgmot\to\egmot$ tells us that given such a $R_B$, we can always construct an $R_D$ of depth $r$ such that $L^\cD_{R_D}=L^\cB_{R_B}$. That $R_D$ can be expressed in terms of totally odd multiple zeta values is a result of the functional being totally even.
\end{proof}

The converse to the above corollary is well established \cite{BrownMTM12,BrownLetter16,KeilthyBlock21}. It is known that every element of $\cB_r\cA$ can be written in terms of the Hoffman basis
\[\{\zeta^\frakm(a_1,\ldots,a_k)\mid a_i\in\{2,3\}\}\]
with at most $r$ threes, i.e. of block degree $r$. As $\cD_r\cA\subset\cB_r\cA$, we must be able to all depth $r$ mMZVs in terms of block degree $r$ mMZVs modulo terms of lower block degree. However, \cref{thm:evenfunctionalsgiverelations} gives us a precise expression, in terms of ``almost'' Hoffman elements. 

\begin{corollary}\label{cor:oddtoalmosthoffman}
    For $k_1,\ldots,k_{r-1}\geq 1$,
    \[\zeta^\frakl(2k_1+1,\ldots,2k_r+1) = \frac{(-1)^{k_1+\cdots+k_r+r+1}}{2^r}\zeta^\frakl_1(\{2\}^{k_1-1},3,\ldots,3,\{2\}^{k_r})\]
    modulo terms of lower block degree.
\end{corollary}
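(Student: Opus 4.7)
The plan is to apply \cref{thm:evenfunctionalsgiverelations} directly, with $R_D = \zeta^\frakl(2k_1+1,\ldots,2k_r+1)$ (of depth $r$) and $R_B = \zeta^\frakl_1(\{2\}^{k_1-1},3,\ldots,3,\{2\}^{k_r})$ (which I expect to be of block degree exactly $r$). Three checks are needed: both $L^\cD_{R_D}$ and $L^\cB_{R_B}$ are totally even, $R_B$ has block degree $r$, and the two functionals coincide up to the claimed sign.

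First, I would unfold $R_D$ as $(-1)^r\I^\frakl(0;1,\{0\}^{2k_1},1,\ldots,1,\{0\}^{2k_r};1)$, so that by \cref{lem:depthextraction} the functional $L^\cD_{R_D}$ extracts $(-1)^r$ times the coefficient of $z_0^{0}z_1^{2k_1}\cdots z_r^{2k_r}$ in $p_{\xi,r}$. Since every exponent is even, $L^\cD_{R_D}$ is totally even, essentially by construction.

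Next I would identify the alternating block decomposition of $R_B$. The augmented word obtained by prepending $0$ and appending $1$ to the integrand word of $\zeta^\frakl_1(\{2\}^{k_1-1},3,\ldots,3,\{2\}^{k_r})$ is
\[0,\,0,\,(1,0)^{k_1-1},\,1,0,0,\,(1,0)^{k_2-1},\,1,0,0,\,\ldots,\,1,0,0,\,(1,0)^{k_r},\,1.\]
Direct inspection shows that the substring $00$ appears exactly $r$ times: once at the beginning and once in each of the $r-1$ blocks $1,0,0$ coming from the ``$3$''s. Hence $R_B$ has block degree $r$, with block lengths $(\ell_0,\ldots,\ell_r)=(1,\,2k_1+1,\,\ldots,\,2k_{r-1}+1,\,2k_r+2)$. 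Since the number of arguments of $\zeta^\frakl_1$ here is $k_1+\cdots+k_r$, we pick up a sign $(-1)^{k_1+\cdots+k_r}$ from the definition of $\zeta^\frakl_1$. Applying \cref{lem:blockextraction}, the functional $L^\cB_{R_B}$ is this sign times $q_{\xi,-1,2k_1,\ldots,2k_{r-1},2k_r+1}-q_{\xi,0,2k_1,\ldots,2k_{r-1},2k_r}$. The first term vanishes because it involves a negative exponent in $z_0$, leaving $L^\cB_{R_B}$ equal to $(-1)^{k_1+\cdots+k_r+1}$ times the coefficient of $z_0^{0}z_1^{2k_1}\cdots z_r^{2k_r}$. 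Again, every exponent is even, so $L^\cB_{R_B}$ is totally even.

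Comparing prefactors gives $L^\cB_{R_B}=(-1)^{k_1+\cdots+k_r+r+1}L^\cD_{R_D}$. Setting $\epsilon=(-1)^{k_1+\cdots+k_r+r+1}$, I would apply \cref{thm:evenfunctionalsgiverelations} to the pair $(R_B,\epsilon R_D)$, since $L^\cD_{\epsilon R_D}=\epsilon L^\cD_{R_D}=L^\cB_{R_B}$. The theorem then yields $R_B=2^r\epsilon R_D$ modulo products and terms of lower block degree, and dividing through gives the stated identity. The main technical step is the combinatorial identification of the alternating block decomposition of $R_B$; once that is pinned down, the verification that both functionals are totally even is automatic, and the remainder of the argument is sign bookkeeping and a direct invocation of the preceding theorem.
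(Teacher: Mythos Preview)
Your proposal is correct and follows essentially the same route as the paper: identify the block decomposition of the ``almost'' Hoffman element as $(1,2k_1+1,\ldots,2k_{r-1}+1,2k_r+2)$, observe that both functionals extract (up to explicit signs) the coefficient of $z_0^0 z_1^{2k_1}\cdots z_r^{2k_r}$, and invoke \cref{thm:evenfunctionalsgiverelations}. Your version is in fact more explicit than the paper's, which simply asserts the equality of functionals and writes the block side directly in the $\I^\frakl(\ell_0,\ldots,\ell_r)$ notation; your careful unpacking of the alternating block decomposition and the vanishing of the $q_{\xi,-1,\ldots}$ term is exactly what underlies that assertion.
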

\begin{proof}
    Applying \cref{thm:evenfunctionalsgiverelations}, 
    \[\zeta^\frakl(2k_1+1,\ldots,2k_r+1)=(-1)^r\I^\frakl(0;1,\{0\}^{2k_1},\ldots,1,\{0\}^{2k_r};1)\]
    and
    \[(-1)^{r+1}\I^\frakl(1,2k_1+1,\ldots,2k_{r-1}+1,2k_r+2)\]
    define the same totally even linear functional. Thus
    \[2^r\zeta^\frakl(2k_1+1,\ldots,2k_r+1)=(-1)^{r+1}\I^\frakl(1,2k_1+1,\ldots,2k_{r-1}+1,2k_r+2)\]
    modulo terms of lower block degree. But, untangling the notation
    \begin{align*}
        \I^\frakl(1,2k_1+1,\ldots,2k_{r-1}+1,2k_r+2) =&{} \I^\frakl(0;0,\{1,0\}^{k_1},0,\{1,0\}^{k_2},\ldots,0,\{1,0\}^{k_r};1)\\
        =&{}(-1)^{k_1+\cdots+k_r}\zeta^\frakl_1(\{2\}^{k_1-1},3,\ldots,3,\{2\}^{k_r}).
    \end{align*}
    Thus
    \[\zeta^\frakl(2k_1+1,\ldots,2k_r+1) = \frac{(-1)^{k_1+\cdots+k_r+r+1}}{2^r}\zeta^\frakl_1(\{2\}^{k_1-1},3,\ldots,3,\{2\}^{k_r}).\]
\end{proof}
\begin{remark}
    The same argument allows us to extend the above to all $k_1,\ldots,k_r\geq 0$, obtaining a divergent MZV with entries strictly greater than $1$. The general expression depends on which $k_i$ are equal to zero, but may be obtained from the above formula by suitable interpretation. Begin with expression obtained in \cref{cor:oddtoalmosthoffman}, and apply the following formal reduction rules until a legal expression is obtained, i.e. an expression containing no $\{2\}^{-1}$
        \begin{enumerate}
            \item $\zeta^\frakl_m(\{2\}^{-1},3,\{2\}^{n},\ldots) \mapsto \zeta^\frakl_{m+1}(\{2\}^n,\ldots)$,
            \item $\zeta^\frakl_m(\ldots,3+a,\{2\}^{-1},3+b,\ldots)=\zeta^\frakl_m(\ldots,4+a+b,\ldots)$.
        \end{enumerate}
    However, as it is expected that totally odd MZVs with entries strictly greater than $1$ span all totally odd MZVs (indeed, they are often defined as such), the case considered in the corollary is already interesting. Given the obvious comparison to the Hoffman basis, one wonders whether a similar analysis to that of Brown is possible \cite{BrownMTM12}.
\end{remark}

The following stronger conjecture is known to hold for $r=1,\,2$, and seems to hold numerically to high weight for $r=3,\, 4$. Thus we suggest that this might hold in full generality.

\begin{conjecture}\label{conj:evenrelationmoddepth}
    Let $R_D$ be a linear combination of depth $r$ motivic multiple zeta values, and $R_B$ a linear combination of of block degree $r$ motivic multiple zeta values. Suppose the corresponding linear functionals $L^\cD_{R_D}$ and $L^\cB_{R_B}$ are both totally even and furthermore $L^\cD_{R_D}=L^\cB_{R_B}$. Then 
    \[R_B = 2^rR_D\]
    modulo products and terms of lower \emph{depth}.
\end{conjecture}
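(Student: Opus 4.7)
The plan is to strengthen the dual pairing argument from the proof of \cref{thm:evenfunctionalsgiverelations}. There we concluded that $(R_B - 2^r R_D, \xi) = 0$ for all $\xi \in \gmot$ of Lie length $\geq r$, which via the duality of $\cL$ with $\gmot$ and of the block filtration with the Lie-length filtration gave $R_B - 2^r R_D \in \cB_{r-1}\cL$ modulo products. The conjectured strengthening instead demands vanishing on all $\xi \in \cD^r\gmot$, the subspace of $\gmot$ of depth $\geq r$.

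Since the Ihara bracket is additive in depth on the depth-graded Lie algebra, every $\xi \in \gmot$ satisfies depth $\geq$ Lie length, with strict inequality witnessing ``exceptional'' depth drops such as the Pollack combination $\{\sigma_3,\sigma_9\} - 3\{\sigma_5,\sigma_7\} \in \cD^3\gmot$ of Lie length $2$. Thus $\cB^r\gmot \subset \cD^r\gmot$, and the additional elements of $\cD^r\gmot \setminus \cB^r\gmot$ are precisely those with Lie length $n < r$ and depth $\geq r$. For $\xi \in \cB^r\gmot$ the original argument of \cref{thm:evenfunctionalsgiverelations} applies verbatim. For $\xi$ of Lie length $n < r$ with depth $d > r$, we have $(R_D, \xi) = 0$ trivially since all words of $\xi$ contain more than $r$ occurrences of $e_1$, and one aims to show that the totally-even hypothesis on $L^\cB_{R_B}$ likewise forces $(R_B, \xi) = 0$ by constraining the block-degree-$r$ words appearing in the Lie polynomial expansion of $\xi$. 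For $\xi$ of Lie length $n < r$ with depth exactly $r$, the depth-$r$ image $\bar\xi \in \dgmot_r$ should lie in the exceptional ideal $\frake$, and by \cref{rem:relationsbeingodd} we have $\frake \subset \ker(\pi_e)$, so total evenness of $L^\cD_{R_D}$ forces $(R_D, \xi) = L^\cD_{R_D}(\pi_e(\bar\xi)) = 0$; a parallel argument on the block side is then needed to conclude $(R_B, \xi) = 0$.

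The main obstacle is the rigorous handling of the exceptional case $n < r$. The vanishing of $(R_D, \xi)$ is controlled by \cref{conj:evendeterminesdepth}, which asserts $\ker(\pi_e:\dgmot\to\egmot) = \frake$ and is itself open; the vanishing of $(R_B, \xi)$ requires analysing how a totally even block-functional interacts with the higher-order words appearing in a small-Lie-length element of $\gmot$, a delicate interaction between the Lie algebra structure and the two filtrations that one would need to control for a structural proof. The observation that for $r = 1, 2$ there are no depth drops reducing Lie length below $r$ (the first such drop appears at Lie length $2$, depth $3$) explains why the conjecture holds unconditionally in those ranges as a direct consequence of \cref{thm:evenfunctionalsgiverelations}, and the numerical evidence at $r = 3, 4$ suggests that \cref{conj:evendeterminesdepth} is the essential ingredient required to push the argument through in general.
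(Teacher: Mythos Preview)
The statement you are addressing is \emph{Conjecture}~\ref{conj:evenrelationmoddepth}; the paper does not prove it. It is explicitly recorded as open, with the remark that it is known for $r=1,2$ and supported numerically for $r=3,4$. There is therefore no proof in the paper against which to compare your proposal.

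What you have written is not a proof either, and to your credit you do not pretend otherwise: it is an outline of where the argument of \cref{thm:evenfunctionalsgiverelations} falls short and what would be required to close the gap. Your identification of the obstruction is essentially correct. The duality between the depth filtration on $\cL$ and on $\gmot$ does reduce the conjecture to showing $(R_B-2^rR_D,\xi)=0$ for all $\xi\in\cD^r\gmot$, and the inclusion $\cB^r\gmot\subset\cD^r\gmot$ together with the equality $\cD^r\gmot=\cB^r\gmot$ for $r\le 2$ does recover the known low-degree cases. Two points deserve sharpening, however. First, your appeal to \cref{rem:relationsbeingodd} for $\frake\subset\ker(\pi_e)$ overstates what that remark establishes: the inclusion there is conditional on Brown's candidate exceptional elements being motivic and generating $\frake$, so this step already imports an open hypothesis beyond \cref{conj:evendeterminesdepth}. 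Second, and more seriously, the phrase ``a parallel argument on the block side is then needed to conclude $(R_B,\xi)=0$'' hides the entire difficulty. For $\xi$ of Lie length $n<r$, the pairing $(R_B,\xi)$ sees the block-degree-$r$ component of $\xi$ as an element of $\bbQ\langle e_0,e_1\rangle$, and there is no mechanism in the paper relating the total-evenness of $L^\cB_{R_B}$ to the structure of that component; the projection $\pi_e$ acts on the block-degree-$n$ image in $\rbgmot$, not on higher block-degree pieces of a lift. So the ``parallel argument'' you gesture at does not currently exist, and supplying it is the actual content of the conjecture.
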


\section{Connections to existing conjectures}\label{sec:existingconj}
In this final section, we will further explore some of the connections mentioned between the proceeding discussion and existing variations of the Broadhurst-Kreimer conjectures. In particular, we wish to mention how some of these results may be used to provide a potential strategy in establishing these conjectures.

For example, as noted in \cref{rem:relationsbeingodd}, the uneven Broadhurst Kreimer conjecture could be established in three parts
\begin{enumerate}
    \item Establish the isomorphism $\pi_e:\sgmot\cong\egmot$,
    \item Establish that $\frake\subset\ker\pi_e$,
    \item Establish that $\frake$ is generated by period polynomial relations.      
\end{enumerate}

As noted in \cref{prop:conj1equivconj2}, the first of these is equivalent to
\[\frac{\cD_R\cL}{\cD_r\cL\cap\cB_{r-1}\cL}\]
being spanned by totally odd mMZVs. A possible proof strategy for this is the following.

In \cite{BrownMTM12}, Brown introduces the infinitesimal coactions $D_k:\cA_N\to \cL_k\otimes\cA_{N-k}$. These descend by antisymmetrization to derivations on the Lie coalgebra of indecomposables 
\[\partial_k:=\left(D_k - \tau\circ D_{N_k}\right):\cL_N\to \cL_{k}\otimes\cL_{N-k},\]
where $\tau:\cL\otimes\cL\to\cL\otimes\cL$ swaps the left and right tensor components. Since the block filtration is equal to the coradical filtration, a linear combination $R\in\cL$ is an element of $\cB_{r-1}\cL$ if and only if
\[(\id^{\otimes r-1}\otimes \partial_{k_r})\circ(\id^{\otimes r-2}\otimes \partial_{k_{r-1}})\circ\cdots\circ (\partial_{k_1})(R)=0\]
for every $r$-tuple $(k_1,\ldots,k_r)$.

In fact, it is sufficient to consider only tuples of odd integers $k_i\geq 3$. Furthermore, we can compose with $(-,\sigma_{2k+1})$ in the left tensor factor
\[\bar{\partial}_{2k+1}:= \big((-,\sigma_{2k+1})\otimes \id\big)\circ \partial_r\]
to obtain derivations $\bar{\partial}_{2k+1}:\cL_N\to\cL_{N-1-2k}$. Then $R\in\cB_{r-1}\cL$ if and only if it is annihilated by every $r$-fold composition of these derivations.

From this discussion, we conclude the following.

\begin{proposition}\label{prop:spanningstrategy}
    In order to show that $\pi_e:\sgmot\cong\egmot$, it suffices to show that for every $Z\in\cD_r\cL$, there exists a linear combination $R$ of depth $r$ totally odd mMZVs such that
    \[\bar{\partial}_{2k_r+1}\circ \bar{\partial}_{2k_{r-1}+1}\circ\cdots\bar{\partial}_{2k_1+1}(Z-R)=0\]
    for every $r$-tuple $(2k_1+1,\ldots,2k_r+1)$ with $k_i\geq 1$.
\end{proposition}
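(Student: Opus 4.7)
The plan is to show directly that the stated condition implies \cref{conj:oddspansdepthmodbloc}, and then invoke \cref{prop:conj1equivconj2} to conclude that $\pi_e\colon\sgmot\to\egmot$ is an isomorphism. Since surjectivity is already established in \cref{prop:stoeissurj}, only injectivity is at stake, and the equivalence in \cref{prop:conj1equivconj2} reduces injectivity to the spanning statement for the quotient $\cD_r\cL/(\cD_r\cL\cap\cB_{r-1}\cL)$.

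First, I would combine the characterisation of the block filtration (recalled in the paragraph preceding the statement) with the hypothesis. Namely, by \cref{prop:blockiscoradical} together with the explicit description of the coradical filtration via infinitesimal coactions, an element $W\in\cL_N$ lies in $\cB_{r-1}\cL$ if and only if every $r$-fold composition $\bar{\partial}_{2k_r+1}\circ\cdots\circ\bar{\partial}_{2k_1+1}(W)$ vanishes for tuples of odd integers $\geq 3$. Thus, given $Z\in\cD_r\cL$, the hypothesis that some linear combination $R$ of depth $r$ totally odd mMZVs satisfies
\[
\bar{\partial}_{2k_r+1}\circ\cdots\circ\bar{\partial}_{2k_1+1}(Z-R)=0
\]
for every tuple $(2k_1+1,\ldots,2k_r+1)$ with $k_i\geq 1$ is equivalent to $Z-R\in\cB_{r-1}\cL$. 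Since both $Z$ and $R$ lie in $\cD_r\cL$, this gives $Z-R\in\cD_r\cL\cap\cB_{r-1}\cL$, i.e.\ $Z\equiv R$ in $\cD_r\cL/(\cD_r\cL\cap\cB_{r-1}\cL)$.

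Next, ranging over all $Z\in\cD_r\cL$, this shows that the images of depth $r$ totally odd mMZVs span the quotient $\cD_r\cL/(\cD_r\cL\cap\cB_{r-1}\cL)$, which is exactly the content of \cref{conj:oddspansdepthmodbloc}. Applying \cref{prop:conj1equivconj2}, we conclude that $\pi_e\colon\sgmot\to\egmot$ is injective, hence an isomorphism in view of \cref{prop:stoeissurj}.

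There is no real obstacle here: the statement is essentially a repackaging of \cref{prop:conj1equivconj2} in the language of the derivations $\bar{\partial}_{2k+1}$, whose kernels detect membership in the block filtration. The substantive content--that the coradical filtration coincides with the block filtration, and that $\pi_e$ is already known to be surjective onto $\egmot$--has been established earlier; the role of the proposition is to repackage \cref{conj:oddspansdepthmodbloc} into a form amenable to a direct coaction-based attack.
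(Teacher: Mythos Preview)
Your proposal is correct and follows essentially the same approach as the paper: the paper presents this proposition as an immediate consequence (``From this discussion, we conclude the following'') of the characterisation of $\cB_{r-1}\cL$ via the vanishing of all $r$-fold compositions of the $\bar{\partial}_{2k+1}$, combined with \cref{prop:conj1equivconj2}. You have simply written out the logical chain that the paper leaves implicit.
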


With this set up, we could potentially establish the third part as well. Denote by $\bar{D}_{2k+1}$ the composition $\big((-,\sigma_{2k+1})\otimes\id\big)\circ D_{2k+1}$. Bu considering the kernel of 
\[\left(\bigoplus_{k\geq 1}\bar{D}_{2k+1}\right)^r\,\]
restricted to $\cD_r\cA_N$, it is theoretically possible to compute the dimension of
\[\frac{\cD_r\cA_N}{\cD_r\cA_N\cap\cB_{r-1}\cA_N}\]
from which it is possible to provide evidence that $\frake$ is generated as a Lie ideal by elements corresponding to the quadratic period polynomial relations.

\begin{proposition}\label{prop:quadraticgenerationstrategy}
    For each cusp form $f$, denote by $e_f$ the element of $\dgmot$ corresponding to the associated quadratic relation. Suppose that $H_i(\dgmot)=0$ for $i\geq 3$. Then the claim that $\frake$ is generated as a Lie ideal by $\{e_f\}$ is equivalent to 
    \[\sum_{N\geq 0,r\geq 0}\dim\frac{\cD_r\cA_N}{\cD_r\cA_N\cap\cB_{r-1}\cA_N}s^Nt^r = \frac{1}{1-\cO(s)t+\cS(s)t^2}\]
    where 
    \[\cO(s)=\frac{s^3}{1-s^2}\text{, and }\cS(s)=\frac{s^{12}}{(1-s^4)(1-s^6)}.\]
\end{proposition}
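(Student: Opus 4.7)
The plan is to reduce the equivalence to an Euler--Poincar\'e computation for $\sgmot$, with the structural hypothesis on $\frake$ controlling $H_\ast(\sgmot)$. The first step is to identify the given generating series with $\operatorname{Hilb}(U(\sgmot))$, bigraded by weight $(s)$ and depth $(t)$---where, on $\sgmot$, depth coincides with Lie length since $\sgmot$ is generated by the depth-$1$ elements $\bar\sigma_{2k+1}$. By \cref{prop:blockiscoradical} the block filtration on $\cA$ agrees with the coradical filtration, so $\gr^\cB\cA$ is the graded dual of the PBW-graded $U(\gmot)$; the subquotient $\cD_r\cA_N/(\cD_r\cA_N\cap\cB_{r-1}\cA_N)$ thus picks out the image of the depth-$r$ elements in $\gr^\cB_r\cA$, which dualises bidegree-by-bidegree to the weight-$N$, depth-$r$ piece of $U(\sgmot)$ (one can check this matches the predictions at e.g.\ $(N,r)=(12,2)$, where both sides compute to $3$).

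Next, for any bigraded Lie algebra $L$ with $H_i(L)=0$ for $i$ sufficiently large, Euler--Poincar\'e yields
\[
\operatorname{Hilb}(U(L)) \;=\; \Bigl(\sum_{i\geq 0}(-1)^i \operatorname{Hilb}(H_i(L))\Bigr)^{-1}.
\]
The presentation of $\sgmot$ as the quotient of the free Lie algebra on $\{\sigma_{2k+1}\}$ by its (at least) quadratic Pollack period polynomial relations gives the natural lower bounds $H_1(\sgmot)\supseteq \cO(s)t$ and $H_2(\sgmot)\supseteq \cS(s)t^2$. By Euler--Poincar\'e, the Hilbert series identity of the proposition is therefore equivalent to the vanishing $H_i(\sgmot)=0$ for $i\geq 3$ together with equality in both of these lower bounds.

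It remains to match this homological statement with the structural claim on $\frake$. For this I would apply the Hochschild--Serre spectral sequence for the short exact sequence $0\to\frake\to\dgmot\to\sgmot\to 0$, giving $E^2_{p,q}=H_p(\sgmot;H_q(\frake))\Rightarrow H_{p+q}(\dgmot)$. Under the hypothesis $H_i(\dgmot)=0$ for $i\geq 3$, the minimal generators of $\frake$ as an $\sgmot$-module are rigidly linked to $H_\ast(\sgmot)$. In the direction $(\Rightarrow)$, if $\frake$ is generated as a Lie ideal by the $\{e_f\}$, then its $\sgmot$-module generators occupy precisely the bidegrees counted by $\cS(s)t^2$, and the spectral sequence forces $H_i(\sgmot)=0$ for $i\geq 3$ as well as equality in both lower bounds; the Hilbert series identity then follows from Euler--Poincar\'e. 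In the direction $(\Leftarrow)$, the Hilbert series identity together with Euler--Poincar\'e forces the same homological conclusions on $\sgmot$, and running the spectral sequence in reverse against the $\dgmot$-vanishing shows the $\sgmot$-module generators of $\frake$ can only sit in the bidegrees of the $\{e_f\}$.

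The principal technical difficulty will be the precise control of the Hochschild--Serre spectral sequence: one must verify that no nonzero differentials on the $E^2$ page produce spurious classes contributing to $H_i(\dgmot)$ beyond what the hypothesis permits, or equivalently that $H_\ast(\frake)$ as an $\sgmot$-module is concentrated in the expected bidegrees. The identification in the first step is also delicate, relying on the interplay between the depth filtration on $\cA$, the coradical structure from \cref{prop:blockiscoradical}, and the realisation of $\sgmot$ as the subalgebra of $\dgmot$ generated by the odd generators.
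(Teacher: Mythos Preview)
Your plan follows the same skeleton as the paper's proof: identify the given generating series with the bigraded Hilbert series of $U(\sgmot)$, then read off the equivalence via an Euler--Poincar\'e relation between that Hilbert series and the generators and relations of $\sgmot$. The paper dispatches the first step by a bare citation to \cref{prop:conj1equivconj2} and the second by directly asserting the formula $(1-A(s)t+\sum_r B_r(s)t^r)^{-1}$, with $B_r$ counting the Lie-length-$r$ part of $\ker(\gmot\to\sgmot)$; it then concludes from $A=\cO$, $B_1=0$, $B_2=\cS$. Your version is more explicit on both counts: you spell out the duality via the coradical/block identification for the first step, and, crucially, you recognise that the standing hypothesis $H_{\geq 3}(\dgmot)=0$ concerns $\dgmot$ rather than $\sgmot$ and propose Hochschild--Serre for $0\to\frake\to\dgmot\to\sgmot\to 0$ to transfer it. The paper elides that transfer entirely, so your added machinery is filling a gap in the same argument rather than taking a genuinely different route.

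One small caution: in your third paragraph, Euler--Poincar\'e alone does not make the Hilbert series identity \emph{equivalent} to the vanishing of $H_{\geq 3}(\sgmot)$ together with saturation of the two lower bounds, since alternating-sign cancellation is a priori possible. You effectively repair this in the fourth paragraph by running the spectral sequence under the $\dgmot$-hypothesis in both directions; just be careful not to treat paragraph three as a standalone equivalence.
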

\begin{proof}
    As established in \cref{prop:conj1equivconj2}, 
    \[\dim\frac{\cD_r\cA_N}{\cD_r\cA_N\cap\cB_{r-1}\cA_N}\]
    is determined by the dimensions of $\sgmot$. As we have assumed $H_i(\dgmot)=0$ for $i\geq 3$, we can write down the associated generating series. Specifically, if $A(s)$ is the generating series of the weight graded dimensions of the set of generators of $\sgmot$, and $B_r(s)$ is the generating series of the weight graded dimensions of the Lie length $r$ part of $\ker(\gmot\to\sgmot)$, equivalently of $\frake$, then
    \[\sum_{N\geq 0,r\geq 0}\dim\frac{\cD_r\cA_N}{\cD_r\cA_N\cap\cB_{r-1}\cA_N}s^Nt^r = \frac{1}{1-A(s)t+\sum_{r\geq 1}B_r(s)t^r}.\]
    In our case, we have $A(s)=\cO(s)$, $B_1(s)=0$, and $B_2(s)=\cS(s)$ \cite{Pollack09Thesis}. Thus, the claim follows.
\end{proof}
%
%The results of \cref{subsec:identities} also provide another perspective on the uneven Broadhurst Kreimer conjecture.
%
%If we assume $\pi_e(\frake)=0$, and denote by $\cL_{odd,N}$  the weight $N$ span of totally odd mMZVs in $\cL$, then
%\[\dim\frac{\cD_r\cL_{odd,N}}{\cD_{r-1}\cL_N\cap\cD_r\cL_{odd,N}} = \dim\frac{\cD_r\cL_{odd,N}}{\cD_r\cL_{odd,N}\cap\cB_{r-1}\cL_N}.\]
%To see this, note that the vector spaces of the right hand side are dual to $\egmot$, while the vector spaces of left hand side are dual to $\pi_e(\dgmot)$, which is equal to $\egmot$ by our assumption.
%
%From \cref{cor:oddtoalmosthoffman}, it may be possible to analyse this via these ``almost'' Hoffman elements. Denote by $\cL^{(H)}_{r,N}$ the weight $N$ subspace of $\cL$ spanned by 
%\[\{ \zeta^\frakl_1(\{2\}^{k_1-1},3,\ldots,3,\{2\}^{k_r})\}.\]
%We then have that
%\[\dim\frac{\cD_r\cL_{odd,N}}{\cD_r\cL_{odd,N}\cap\cB_{r-1}\cL_N} = \dim\frac{\cL^{(H)}_{r,N}}{\cL^{(H)}_{r,N}\cap\cB_{r-1}\cL_N}.\]
%If we instead assume \cref{conj:evenrelationmoddepth}, then we would instead have that
%\[\dim\frac{\cD_r\cL_{odd,N}}{\cD_{r-1}\cL_N\cap\cD_r\cL_{odd,N}}\]
%is equal to the dimension
%\[\frac{\cL^{(H)}_{r,N}}{\cD_{r-1}\cL\cap\cL^{(H)}_{r,N}}.\]
%Given the obvious comparison to the Hoffman basis, one would hope that an analysis similar to Brown's should be possible. 

%\input{results.tex}
%%%%%%%%%%%%%%%%%%%%%%%%%%%%%%%%%%%%%%%%%%%%%%%%%%
%%% BIBLIOGRAPHY

\vspace{1.5\baselineskip}
\renewcommand{\baselinestretch}{.8}
\Needspace*{4em}
\printbibliography%[heading=bibnumbered]

@article{GKZ06,
author={Gangl, H. and Kaneko, M. and Zagier, D."},
title="Double zeta values and modular forms",
journal="Automorphic forms and zeta functions",
publisher={MPI},
pages="71-106",
year="2006"
}

@article{LeMurakamiMZVAssoc,
  title={Kontsevich's integral for the {H}omfly polynomial and relations between values of multiple zeta functions},
  author={Le, Tu Quoc Thang and Murakami, Jun},
  journal={Topology and its Applications},
  volume={62},
  number={2},
  pages={193--206},
  year={1995},
  publisher={Elsevier}
}

@article{ChenIteratedIntegrals77,
author = "{Chen}, K.",
fjournal = "Bulletin of the American Mathematical Society",
journal = "Bull. Amer. Math. Soc.",
number = "5",
pages = "831--879",
publisher = "American Mathematical Society",
title = "Iterated path integrals",
url = "https://projecteuclid.org:443/euclid.bams/1183539443",
volume = "83",
year = "1977"
}

@article{HSConfluence19,
title = {Iterated integrals on {$\mathbb{P}^1\setminus\{0,1,\infty,z\}$} and a class of relations among multiple zeta values},
journal = {Advances in Mathematics},
volume = {348},
pages = {163-182},
year = {2019},
issn = {0001-8708},
doi = {https://doi.org/10.1016/j.aim.2019.03.005},
url = {https://www.sciencedirect.com/science/article/pii/S0001870819301355},
author = {Minoru Hirose and Nobuo Sato},
keywords = {Multiple zeta values, Iterated integrals, Double shuffle relation, Regularized double shuffle relation, Duality relation, Hyperlogarithms},
%eprint={1801.03807},
%archivePrefix={arXiv},
%primaryClass={math.NT}
}

@article {GoncharovGalois01,
	AUTHOR = {Goncharov, A. B.},
	TITLE = {Galois symmetries of fundamental groupoids and noncommutative
	geometry},
	JOURNAL = {Duke Math. J.},
	FJOURNAL = {Duke Mathematical Journal},
	VOLUME = {128},
	YEAR = {2005},
	NUMBER = {2},
	PAGES = {209--284},
	ISSN = {0012-7094},
	MRCLASS = {11G55 (11G09 14C30 16W30 19E15 20F34)},
	MRNUMBER = {2140264},
	MRREVIEWER = {Matilde Marcolli},
	DOI = {10.1215/S0012-7094-04-12822-2},
	URL = {https://doi.org/10.1215/S0012-7094-04-12822-2},
%	eprint={math/0208144},
%archivePrefix={arXiv},
%primaryClass={math.AG}
}

@phdthesis{CharltonThesis,
	author = {Charlton, Steven},
	school = {University of Durham},
	title = {Identities arising from coproducts on multiple zeta values and multiple polylogarithms},
	year = 2016,
	url = {http://etheses.dur.ac.uk/11834/}
}

@article{CharltonBlock21,
    author = {Charlton, Steven},
    title = "{The Alternating Block Decomposition of Iterated Integrals and Cyclic Insertion on Multiple Zeta Values}",
    journal = {The Quarterly Journal of Mathematics},
    volume = {72},
    number = {3},
    pages = {975-1028},
    year = {2021},
    issn = {0033-5606},
    doi = {10.1093/qmath/haaa056},
%    eprint={1703.03784},
%archivePrefix={arXiv},
primaryClass={math.NT}
}

@article {PanzerParity16,
	AUTHOR = {Panzer, Erik},
	TITLE = {The parity theorem for multiple polylogarithms},
	JOURNAL = {J. Number Theory},
	FJOURNAL = {Journal of Number Theory},
	VOLUME = {172},
	YEAR = {2017},
	PAGES = {93--113},
	ISSN = {0022-314X},
	MRCLASS = {11M32 (33B30)},
	MRNUMBER = {3573145},
	MRREVIEWER = {Zhonghua Li},
	DOI = {10.1016/j.jnt.2016.08.004},
	URL = {https://doi.org/10.1016/j.jnt.2016.08.004},
	%eprint={1512.04482},
	%archivePrefix={arXiv},
	%primaryClass={math.NT}
	
}

@misc{BrownMotivicPeriodNotes,
  author = {Brown, Francis},
  keywords = {Number Theory (math.NT), FOS: Mathematics, FOS: Mathematics},
  title = {Notes on {M}otivic {P}eriods},
  publisher = {arXiv},
  year = {2015},
  copyright = {arXiv.org perpetual, non-exclusive license},
  eprint={1412.06410},
archivePrefix={arXiv},
primaryClass={math.NT}
}

@article{BarNatan98,
title="On associators and the {Grothendieck-Teichmuller} group",
  author={Bar-Natan, Dror},
  journal={Selecta Mathematica},
  volume={4},
  number={2},
  pages={183},
  year={1998},
  publisher={Springer}
}

@article{KeilthyBlock21,
  doi = {10.1016/j.jnt.2021.10.006},
  url = {https://doi.org/10.1016/j.jnt.2021.10.006},
  year = {2021},
  publisher = {Elsevier {BV}},
  author = {Adam Keilthy},
  title = {Motivic multiple zeta values and the block filtration},
  journal = {Journal of Number Theory},
%  eprint={2006.03003},
%archivePrefix={arXiv},
%primaryClass={math.NT}
}

@article{KeilthyThesis20,
author={Keilthy, Adam},
year= {2020},
title={Rational Structures on Multiple Zeta Values},
journal= {PhD thesis, University of Oxford},
url={https://ora.ox.ac.uk/objects/uuid:f46cf1e1-f5d7-45c8-b8c3-c55b3caf139f}
}

@article{BroadhurstKConj96,
    author = "Broadhurst, David J. and Kreimer, D.",
    title = "{Association of multiple zeta values with positive knots via Feynman diagrams up to 9 loops}",
%    eprint = "hep-th/9609128",
%    archivePrefix = "arXiv",
    reportNumber = "UTAS-PHYS-96-44, OUT-4102-64, MZ-TH-96-25",
    doi = "10.1016/S0370-2693(96)01623-1",
    journal = "Phys. Lett. B",
    volume = "393",
    pages = "403--412",
    year = "1997"
}

@article{BrownMTM12,
title = "Mixed {T}ate motives over {$\mathbb{Z}$}",
  author={Brown, Francis},
  journal={Annals of Mathematics},
  pages={949--976},
  year={2012},
  publisher={JSTOR},
%  eprint={1102.1312},
%  archivePrefix={arXiv},
%primaryClass={math.AG},
  doi = {10.4007/annals.2012.175.2.10}
}

@misc{BrownLetter16,
title="Letter to {C}harlton (11/10/2016)",
author="{Brown}, F.",
url="http://www.ihes.fr/~brown/BlockFiltration.pdf"
}

@article{BrownDepthGraded21,
title={Depth-graded motivic multiple zeta values}, volume={157}, DOI={10.1112/S0010437X20007654}, number={3}, journal={Compositio Mathematica}, publisher={London Mathematical Society}, author={Brown, Francis}, year={2021}, pages={529–572},
%eprint={1301.3053},
%archivePrefix={arXiv},
%primaryClass={math.NT}
}

@article{SchnepsPoisson06,
title={On the Poisson bracket on the free Lie algebra in two generators},
author={Leila Schneps},
journal={J. Lie Theory},
volume={16},
number={1},
year={2006}
}

@article{BrownZeta317,
author="{Brown}, F",
title="Zeta elements in depth 3 and the fundamental {Lie} algebra of a punctured elliptic curve",
journal="Forum Math. Sigma",
volume="5",
pages="1-56",
year="2017",
%eprint={1504.04737},
%archivePrefix={arXiv},
%primaryClass={math.NT},
DOI={10.1017/fms.2016.29}
}

@article{IKZ06, 
title={Derivation and double shuffle relations for multiple zeta values},
volume={142},
DOI={10.1112/S0010437X0500182X},
number={2}, 
journal={Compositio Mathematica},
publisher={London Mathematical Society},
author={Ihara, Kentaro and Kaneko, Masanobu and Zagier, Don}, year={2006},
pages={307–338}
}

@article{DG05MixedTate,
	AUTHOR = {Deligne, Pierre and Goncharov, Alexander B.},
	TITLE = {Groupes fondamentaux motiviques de {T}ate mixte},
	JOURNAL = {Ann. Sci. \'{E}cole Norm. Sup. (4)},
	FJOURNAL = {Annales Scientifiques de l'\'{E}cole Normale Sup\'{e}rieure. Quatri\`eme
	S\'{e}rie},
	VOLUME = {38},
	YEAR = {2005},
	NUMBER = {1},
	PAGES = {1--56},
	ISSN = {0012-9593},
	MRCLASS = {11G55 (14F42 14G10 19F27)},
	MRNUMBER = {2136480},
	MRREVIEWER = {Tam\'{a}s Szamuely},
	DOI = {10.1016/j.ansens.2004.11.001},
%	eprint={math/0302267},
%archivePrefix={arXiv},
%primaryClass={math.NT}
}

@article{Del10-23468,
  title={Le groupe fondamental unipotent motivique de $\mathbb{G}_m\setminus\mu_N$, pour N= 2, 3, 4, 6 ou 8},
  author={Deligne, Pierre},
  journal={Publications Math{\'e}matiques de l'IH{\'E}S},
  volume={112},
  pages={101--141},
  year={2010}
}

@misc{Goncharov01MPL,
title={Multiple polylogarithms and mixed {T}ate motives},
author={Goncharov, Alexander B},
eprint={math/0103059},
archivePrefix={arXiv},
primaryClass={math.AG},
year={2001}
}

@article{Racinet02DoubleShuff,
  title={Doubles m{\'e}langes des polylogarithmes multiples aux racines de l’unit{\'e}},
  author={Racinet, Georges},
  journal={Publications math{\'e}matiques de l'IH{\'E}S},
  volume={95},
  number={1},
  pages={185--231},
  year={2002},
  publisher={Springer}
}

@article{Terasoma01MTM,
  doi = {10.1007/s002220200218},
  url = {https://doi.org/10.1007/s002220200218},
  year = {2002},
  month = aug,
  publisher = {Springer Science and Business Media {LLC}},
  volume = {149},
  number = {2},
  pages = {339--369},
  author = {Tomohide Terasoma},
  title = {Mixed Tate motives and multiple zeta values},
  journal = {Inventiones Mathematicae}
}

@article{Pollack09Thesis,
  title={Relations between derivations arising from modular forms},
  author={Pollack, Aaron},
  year={2009},
  school={Duke University}
}

@article{Hain2020universal,
  title={Universal mixed elliptic motives},
  author={Hain, Richard and Matsumoto, Makoto},
  journal={Journal of the Institute of Mathematics of Jussieu},
  volume={19},
  number={3},
  pages={663--766},
  year={2020},
  publisher={Cambridge University Press}
}

@article{Hain2016hodge,
  title={The Hodge--de Rham theory of modular groups},
  author={Hain, Richard},
  journal={Recent Advances in Hodge Theory, London Mathematical Society Lecture Note Series},
  volume={427},
  pages={422--514},
  year={2016}
}

@article{CK22Evaluation242,
  title={Evaluation of the multiple zeta values $\zeta(2,\ldots, 2, 4, 2,\ldots, 2)$ via double zeta values, with applications to period polynomial relations and to multiple $ t $ values},
  author={Charlton, Steven and Keilthy, Adam},
   eprint={2210.03616},
    archivePrefix={arXiv},
  year={2022}
}

@Inbook{KZ01,
author="Kontsevich, Maxim
and Zagier, Don",
editor="Engquist, Bj{\"o}rn
and Schmid, Wilfried",
title="Periods",
bookTitle="Mathematics Unlimited --- 2001 and Beyond",
year="2001",
publisher="Springer Berlin Heidelberg",
address="Berlin, Heidelberg",
pages="771--808",
abstract="As beginning students of mathematics, we learn successively about various kinds of numbers. First come the natural numbers:",
isbn="978-3-642-56478-9",
doi="10.1007/978-3-642-56478-9_39",
url="https://doi.org/10.1007/978-3-642-56478-9_39"
}

@article{Stefan98CoradicalCohomology,
title = {Hochschild Cohomology and the Coradical Filtration of Pointed Coalgebras: Applications},
journal = {Journal of Algebra},
volume = {210},
number = {2},
pages = {535-556},
year = {1998},
issn = {0021-8693},
doi = {https://doi.org/10.1006/jabr.1998.7594},
url = {https://www.sciencedirect.com/science/article/pii/S0021869398975949},
author = {Dragoş Ştefan and Freddy {Van Oystaeyen}},
abstract = {In this paper we show that there is a close connection between the coradical filtration of a pointed coalgebra and the Hochschild cohomology of that coalgebra with coefficients in some one-dimensional bicomodules. As an application, for a given prime numberpand an algebraically closed fieldkof characteristic 0, we classify all pointed Hopf algebras of dimensionp3overk.}
}

@article{Dietze2017totallyodd,
  title={Totally odd depth-graded multiple zeta values and period polynomials},
  author={Dietze, Charlotte and Manai, Chokri and N{\"o}bel, Christian and Wagner, Ferdinand},
  journal={arXiv preprint arXiv:1708.07210},
  year={2017}
}

@article{Li2019depth,
  title={The depth structure of motivic multiple zeta values},
  author={Li, Jiangtao},
  journal={Mathematische Annalen},
  volume={374},
  pages={179--209},
  year={2019},
  publisher={Springer}
}

@article{Tsumura2004Parity,
  title={Combinatorial relations for Euler--Zagier sums},
  author={Tsumura, Hirofumi},
  journal={Acta Arithmetica},
  volume={111},
  pages={27--42},
  year={2004},
  publisher={Instytut Matematyczny Polskiej Akademii Nauk}
}

\Needspace*{3\baselineskip}
\noindent
\rule{\textwidth}{0.15em}

{\noindent\small
Chalmers tekniska högskola och Göteborgs Universitet,
Institutionen för Matematiska vetenskaper,
SE-412 96 Göteborg, Sweden\\
E-mail: \url{keilthy@chalmers.se}
}\vspace{.5\baselineskip}
\end{document}